\documentclass[notitlepage,leqno,9pt]{amsart}
\usepackage{amsfonts,amssymb,cite}
\usepackage[latin1]{inputenc}
\usepackage{enumerate}
\usepackage{amssymb}
\catcode`\@=11 \@addtoreset{equation}{section}

\catcode`\@=12
\usepackage{latexsym}
\usepackage{textcomp}
\usepackage{amsmath}
\usepackage{amsfonts}
\usepackage{mathrsfs}

\usepackage{color}
\usepackage{amsmath, amsthm, amssymb}
\usepackage{amsfonts}
\usepackage[dvips]{epsfig}
\usepackage{graphicx}
\usepackage{caption}
\usepackage{subcaption}
\usepackage[english]{babel}
\usepackage{hyperref}

\usepackage{tikz}
\usepackage{rotating}
\usepackage{cite}
\usepackage{amscd}
\usepackage{color}
\usepackage{bm}
\usepackage{enumerate}
\usepackage{verbatim}
\usepackage{hyperref}
\usepackage{amstext}
\usepackage{latexsym}
%

\theoremstyle{plain}
\newtheorem{theorem}{Theorem}[section]

\newtheorem{proposition}[theorem]{Proposition}
\newtheorem{lemma}[theorem]{Lemma}

\numberwithin{theorem}{section}
\numberwithin{equation}{section}

\newcommand{\average}{{\mathchoice {\kern1ex\vcenter{\hrule height.4pt
width 6pt depth0pt} \kern-9.7pt} {\kern1ex\vcenter{\hrule
height.4pt width 4.3pt depth0pt} \kern-7pt} {} {} }}

\def\R{\mathbb{R}}



\renewcommand{\a }{\alpha }
\renewcommand{\b }{\beta }
\renewcommand{\d}{\delta }

\newcommand{\D }{\Delta }
\newcommand{\tr }{\hbox{ tr } }
\newcommand{\e }{\varepsilon }

\newcommand{\G }{\Gamma}
\renewcommand{\l }{\lambda }

\newcommand{\n }{\nabla }
\newcommand{\vp }{\varphi }
\renewcommand{\phi}{\varphi}

\newcommand{\s }{\sigma }

\renewcommand{\th }{\theta }

\renewcommand{\O }{\Omega }

\newcommand{\ov}{\overline}

\newcommand{\be}{\begin{equation}}
\newcommand{\ee}{\end{equation}}

\newcommand{\de}{\partial}

\newcommand{\ti}{\widetilde}

\renewcommand{\k}{\kappa}

\newcommand{\calC }{\mathcal{C}}

\newcommand{\calD }{\mathcal{D}}

\newcommand{\N}{\mathbb{N}}




\newcommand{\cD}{{\mathcal D}}

\newcommand{\dist}{{\rm dist}}

\newcommand{\B}{{Q}}




\renewcommand{\epsilon}{\varepsilon}




\begin{document}
 
\title[Hardy-Sobolev inequality with higher dimensional singularity]
{  Hardy-Sobolev inequality with higher dimensional singularity}
%

\author{El Hadji Abdoulaye Thiam}
\address{E. H. A. T.: African Institute for Mathematical Sciences in Senegal, KM 2, Route de
Joal, B.P. 14 18. Mbour, Senegal. }
\email{elhadji@aims-senegal.org}
\begin{abstract}
For $N\geq 4$, we let $\O$ to be a smooth  bounded domain of $\R^N$, $\G$ a smooth closed submanifold of $\O$ of dimension $k$ with $1\leq k \leq N-2$ and $h$ a continuous function defined on $\O$. We denote by $\rho_\G\left(\cdot\right):=\dist_g\left(\cdot, \G\right)$ the distance function to $\G$. For $\s\in (0,2)$, we study existence of positive solutions $u \in H^1_0\left(\O\right)$ to the nonlinear equation
$$
-\D u+h u=\rho_\G^{-\s} u^{2^*(\s)-1} \qquad \textrm{in } \O,
$$
where $2^*(\s):=\frac{2(N-\s)}{N-2}$ is the critical Hardy-Sobolev exponent. In particular, we provide existence of solution under the influence of the local geometry of $\G$ and the potential $h$.
\end{abstract}
\maketitle
\section{Introduction}\label{Intro}
We consider the following Hardy-Sobolev inequality with cylindrical weight: for $N\geq 3$, $0\leq k \leq N-1 \textrm{ and $ 0\leq \s\leq 2$}$, we have
\begin{equation}\label{Hardy-Sobolev11}
\int_{\R^N} |\nabla v|^2 dx \geq C \biggl(\int_{\R^N} |z|^{-\s} |v|^{2^*_\s} dx\biggl)^{2/2^*_\s}\qquad \textrm{ for all $v \in \calD^{1,2}(\R^N)$,}
\end{equation}
where $x=(t,z)\in \R^k \times \R^{N-k}$, $C$ is a positive constant depending only on  $N$, $k$ and $\s$,  $2^*_\s:= \frac{2(N-\s)}{N-2}$ is the critical Hardy-Sobolev exponent and $\calD^{1,2}(\R^N)$ is the completion of $C^\infty_c(\R^N)$ with respect to the norm 
$$
v \longmapsto \left(\int_{\R^N}|\n v|^2 dx\right)^{1/2}.
$$
For $\s=0$, inequality \eqref{Hardy-Sobolev11} corresponds to the following {\it classical Sobolev} inequality:
\begin{equation}\label{Sobolev11}
\int_{\R^N} |\nabla v|^2 dx \geq C \biggl(\int_{\R^N} |v|^{2^*_0} dx\biggl)^{2/2^*_0}\qquad \textrm{ for all $v \in \calD^{1,2}(\R^N)$}.
\end{equation}
In this case, the best constant (denoted $S_{N,0}$) is achieved by the function $w(x)=c\left(1+|x|^2\right)^{\frac{2-N}{2}}$ and hence the value  $S_{N,0}=N\left(N-2\right) \left[\Gamma\left(N/2\right)/ \Gamma\left(N\right)\right]$ explicitly (see Aubin \cite{Aubin}, Lieb \cite{LIEB} and Talenti \cite{rrtt}). Here $\G$ is the classical Euler function.\\

For $\s=2$ and $k\neq N-2$, inequality \eqref{Hardy-Sobolev11} corresponds to the following {\it classical Hardy} inequality:
\begin{equation}\label{Hardy11}
\int_{\R^N} |\nabla v|^2 dx \geq \left(\frac{N-k-2}{2}\right)^2\int_{\R^N} |z|^{-2} |v|^{2} dx \qquad \textrm{ for all $v \in \calD^{1,2}(\R^N)$}.
\end{equation}
The constant $\left(\frac{N-k-2}{2}\right)^2$ is optimal but it is never achieved. This fact suggests that it is possible to improve this inequality, see Brezis-Vasquez \cite{BV} and references therein. For improved Hardy inequality on compact Riemannian manifolds, see the paper of the author \cite{THIAM}.
For $\s\in(0,2)$, the best constant in \eqref{Hardy-Sobolev11} is given by 
\be\label{eq:Hard-Sob-sharp11}
S_{N,\s}:=\inf \left\lbrace \displaystyle\int_{\R^N} |\n u|^2 dx, \quad u\in \calD^{1,2}\left(\R^N\right)  \quad \textrm{and $\displaystyle\int_{\R^N} |z|^{-\s} |u|^{2^*_\s} dx=1$} \right\rbrace
\ee
and it is attained, see Badiale-Tarantello \cite{BT}. Moreover extremal functions  are cylindrical symmetric, see Fabbri-Mancini-Sandeep \cite{FMS}. However few of them are known explicitly. Indeed, when $k=0$, they are given up to scaling by 
$
w(x)=\left[\left(N-\s\right)\left(N-2\right)\right]^{\frac{N-2}{2(2-\s)}}\left(1+|x|^{2-\s}\right)^{\frac{2-N}{2-\s}}.
$
Thus the best constant is 
$$
S_{N,\s}:= (N-2)(N-\s) \biggl[\frac{w_{N-1}}{2-\s} \frac{\Gamma^2(N-\frac{\s}{2-\s})}{\Gamma(\frac{2(N-\s)}{2-\s})}\biggl]^{\frac{2-\s}{N-\s}},
$$
see Lieb \cite{LIEB}. When $\s=1$,  the authors in \cite{FMS} showed that the minimizers are given by
$$
w(x)=\left\lbrace(N-k)(k-1)\right\rbrace^{\frac{N-2}{2}}\left(\left(1+|z|\right)^2+|t|^2\right)^{\frac{2-N}{2}}
$$
up to scaling in the full variable and translations in the $t-$direction.\vspace{0.6cm}\\
In this paper, we consider a Hardy-Sobolev inequality in a bounded domain of the Euclidean space with singularity a closed submanifold of higher dimensional singularity. In particular, we let $\O$ be a bounded domain of $\R^N$, $N\geq 3$, and $h$ a continuous function defined on $\O$. Let  $\G \subset \O$ be a smooth closed submanifold in $\O$ of dimension $k$, with $1\leq k \leq N-2$. We are concerned with the existence of minimizers for the following Hardy-Sobolev best constant:
\be \label{eq:min-to-study}
\mu_{h, \s}(\O,\G):=\inf_{ u\in  H^1_0(\O) \setminus\{0\} }\frac{ \displaystyle \int_{\O} |\nabla u|^2 dx +\int_\O h u^2 dx }{  \displaystyle \left(  \int_{\O} \rho_\G^{-\s} |u|^{2^*_\s} dx \right)^{\frac{2}{2^*_\s}}},
\ee
where $\s\in(0,2)$,  $\displaystyle 2^*_\s:= \frac{2(N-\s)}{N-2}$ and $ \rho_\G(x):=\textrm{dist}(x,\G)$ is the distance function to $\G$.  Here and in the following, we assume that $-\D+h$ defines a  coercive bilinear form on $H^1_0(\O)$. We are interested with the effect of the local geometry of the submanifold $\G$ on  the existence of minimizer for $\mu_{h,\s}(\O,\G)$.\\

When $k=1$ (i.e. $\G$ is a curve), we have the following result due to the author and Fall \cite{Fall-Thiam}. 
\begin{theorem}\label{th:main1}
Let $N \geq 3$, $\s\in (0,2)$ and   $\O$  be a   bounded domain of $\R^N$. Consider  $\G$ a smooth closed curve contained in $\O$.
Let  $h$ be  a continuous function such that the linear operator $-\D+h$ is coercive. Then there exists a positive constant $C_{N,\s}$, only depending on $N$ and $\s$ with the property that if  there exists  $y_0\in \G$  such that 
\begin{equation}\label{eq:h-bound-main-th-1}
\begin{cases}
h(y_0)+C_{N,\s} |\k (y_0)|^2<0 &\quad \textrm{for $N\geq 4$}\\\\
m(y_0)>0 &\quad \textrm{for $N=3$}
\end{cases}
\end{equation}
then $\mu_{h,\s}\left(\O,\G\right) < S_{N,\s},$ and $\mu_{h,\s}\left(\O,\G\right)$ is achieved by a positive function. Here $\k:\G\to \R^N$ is the curvature vector of   $\G$ and $m: \O \to \R$ is the mass-the trace of the regular part of the  Green function of the operator $-\D+h$ with zero Dirichlet data.
\end{theorem}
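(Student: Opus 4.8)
The plan is to reduce everything to the strict inequality $\mu_{h,\sigma}(\Omega,\Gamma)<S_{N,\sigma}$ and then to verify it by a delicate test-function computation. Indeed, the constant $S_{N,\sigma}$ from \eqref{eq:Hard-Sob-sharp11} is the energy threshold of the model (blow-up) problem on $\mathbb{R}^N=\mathbb{R}^k\times\mathbb{R}^{N-k}$, so once the strict inequality is known a standard concentration-compactness argument produces a minimizer. I would therefore organise the proof in two blocks: first, \emph{strict inequality implies existence}, and second, \emph{construction of test functions realizing the strict inequality under} \eqref{eq:h-bound-main-th-1}, with the sharp geometric expansion carrying the real content.

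For the first block I would take a minimizing sequence $u_n\in H^1_0(\Omega)$ with $\int_\Omega \rho_\Gamma^{-\sigma}|u_n|^{2^*_\sigma}\,dx=1$ and $\int_\Omega(|\nabla u_n|^2+h u_n^2)\,dx\to\mu_{h,\sigma}(\Omega,\Gamma)$. Coercivity of $-\Delta+h$ bounds $(u_n)$ in $H^1_0(\Omega)$, so up to a subsequence $u_n\rightharpoonup u$. The Brezis--Lieb lemma splits both the numerator and the weighted $L^{2^*_\sigma}$ quantity into the contribution of $u$ and that of the remainder $u_n-u$. Any loss of compactness occurs through concentration, either along $\Gamma$, where the local profile is the model Hardy--Sobolev extremal and the cost is at least $S_{N,\sigma}$ per unit of concentrated mass, or at interior points off $\Gamma$, where the weight is a positive constant and the profile is a standard Sobolev bubble; in either case, if $u\equiv0$ one is forced into $\mu_{h,\sigma}(\Omega,\Gamma)\geq S_{N,\sigma}$, contradicting the hypothesis. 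Hence $u\not\equiv0$, the constraint is saturated, $u$ is a minimizer, and replacing $u$ by $|u|$ together with the maximum principle gives positivity.

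For the second block, fix $y_0\in\Gamma$ and introduce Fermi coordinates $(t,z)\in\mathbb{R}^k\times\mathbb{R}^{N-k}$ adapted to $\Gamma$ in a tubular neighbourhood of $y_0$. There the distance satisfies $\rho_\Gamma(x)=|z|\bigl(1+O(|z|)\bigr)$ and the volume element expands as $\sqrt{\det g}=1-\langle \kappa(y_0),z\rangle+O(|z|^2)$, the curvature vector $\kappa$ entering at first order. Let $W$ be the cylindrically symmetric extremal of the model problem (Fabbri--Mancini--Sandeep), rescale $W_\epsilon(x)=\epsilon^{-(N-2)/2}W(x/\epsilon)$, multiply by a fixed cutoff supported in the tubular neighbourhood, and transplant the result to $\Omega$ to obtain $u_\epsilon$. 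I would then expand the Rayleigh quotient in powers of $\epsilon$: the leading term reproduces $S_{N,\sigma}$, while the next-order terms collect the curvature correction coming from $\sqrt{\det g}$ and from the expansion of $\rho_\Gamma^{-\sigma}$, together with the potential term $h(y_0)\int_\Omega u_\epsilon^2\,dx$.

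The dichotomy in \eqref{eq:h-bound-main-th-1} is exactly the competition between these corrections, and this is where the main difficulty lies. For $N\geq 4$ one has $\int_\Omega u_\epsilon^2\,dx\sim\epsilon^2$ (with a $|\log\epsilon|$ factor when $N=4$), so the curvature and potential corrections appear at the same order $\epsilon^2$ and combine into a term proportional to $\epsilon^2\bigl(h(y_0)+C_{N,\sigma}|\kappa(y_0)|^2\bigr)$; the hypothesis then drives the quotient below $S_{N,\sigma}$. The hard part here is to compute the constant $C_{N,\sigma}$ explicitly from the precise decay of $W$ and the integrals of $W^2$ and $|\nabla W|^2$ against the geometric correction, and to check that it is positive. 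For $N=3$ (so $N-k=2$) the integral $\int_\Omega u_\epsilon^2\,dx\sim\epsilon$ dominates the $\epsilon^2$ curvature term, and the bare test function no longer resolves the sign; one must correct $u_\epsilon$ by the regular part of the Green function of $-\Delta+h$ so that the leading correction becomes $-\,m(y_0)\,\epsilon$ times a positive factor, yielding $\mu_{h,\sigma}(\Omega,\Gamma)<S_{N,\sigma}$ precisely when $m(y_0)>0$. This low-dimensional Green-function and mass expansion, analogous to the positive-mass phenomenon in the Brezis--Nirenberg problem, is the most delicate step of the whole argument.
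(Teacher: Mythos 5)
Your proposal follows essentially the same route as the paper: the paper establishes Theorem \ref{th:main1} by citing \cite{Fall-Thiam}, but its own treatment of the general case (Proposition \ref{Pro3333} for ``strict inequality implies attainment'' via the localized Hardy--Sobolev inequality and Brezis--Lieb, then Lemma \ref{Expansion} and Proposition \ref{Proposition2} for the Fermi-coordinate test-function expansion with the $\e^2$ curvature/potential correction and the $|\log\e|$ factor at the critical dimension) is exactly the two-block strategy you describe, and the $N=3$ positive-mass correction is handled in \cite{Fall-Thiam} as you indicate. The only minor imprecision is your description of possible concentration off $\G$: since $2^*_\s<2^*_0$ the embedding is compact away from the singular set, so no weighted mass can concentrate there at all, which is what Lemma \ref{OuiOui} encodes.
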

This result shows the dichotomy between the case $N \geq 4$ and the case $N=3$ as in Brezis-Nirenberg \cite{BrezisNirenberg}, Druet \cite{Druet00}, Jaber \cite{Jaber1} et references therein.\\

Our main result deals with the case $2\leq k \leq N-2$ and $N\geq 4$. Then we have
\begin{theorem}\label{MainResult}
Let $N \geq 4$, $\s\in (0,2)$ and   $\O$  be a   bounded domain of $\R^N$. Consider  $\G$ a smooth closed submanifold  contained in $\O$ of dimension $k$ with $2\leq k \leq N-2$.
Let  $h$ be  a continuous function such that the linear operator $-\D+h$ is coercive. Then there exists  positive constants $C^1_{N,\s}$ and  $C^2_{N,\s}$, only depending on $N$ and $\s$ with the property that if  there exists  $y_0\in \G$  such that 
\begin{equation}\label{eq:h-bound-main-th-11}
C^1_{N,\s} H^2(y_0)+C^2_{N,\s} R_g(y_0)+h(y_0)<0
\end{equation}
then $\mu_{h,\s}\left(\O,\G\right) < S_{N,\s},$ and $\mu_{h,\s}\left(\O,\G\right)$ is achieved by a positive function. Here $H^2$ and $R_g$ are respectively the norms of the mean curvature and the scalar curvature of $\G$.
\end{theorem}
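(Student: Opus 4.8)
The plan is to follow the standard two-step strategy for Brezis--Nirenberg type problems at the critical Hardy--Sobolev exponent. First I would establish the \emph{abstract attainment criterion}: whenever $\mu_{h,\s}(\O,\G)<S_{N,\s}$, the infimum in \eqref{eq:min-to-study} is achieved by a positive function. This is a concentration--compactness argument. A minimizing sequence $(u_n)$ is bounded in $H^1_0(\O)$ by coercivity of $-\D+h$, hence converges weakly to some $u$; by a Brezis--Lieb decomposition the only obstruction to strong convergence is the escape of mass into a bubble modeled on the Euclidean extremal, which by the sharp inequality \eqref{Hardy-Sobolev11} carries quotient energy at least $S_{N,\s}$. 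The strict gap $\mu_{h,\s}(\O,\G)<S_{N,\s}$ rules this out, forcing $u\neq 0$, so $u$ is a minimizer; replacing $u$ by $|u|$ and applying the maximum principle (Harnack) then yields positivity.

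The heart of the proof is therefore to certify $\mu_{h,\s}(\O,\G)<S_{N,\s}$ under hypothesis \eqref{eq:h-bound-main-th-11}. Fix $y_0\in\G$ realizing the inequality and introduce Fermi coordinates $(y,z)\in\R^k\times\R^{N-k}$ adapted to $\G$ in a tubular neighborhood of $y_0$, so that $\G$ is locally $\{z=0\}$ and $\rho_\G=|z|+O(|z|^2)$. Since the ambient space is flat, the metric in these coordinates expands as $g_{ij}(y,z)=\delta_{ij}+(\text{second fundamental form})\cdot z+O(|z|^2)$; the first-order term encodes the mean curvature $H(y_0)$, while the genuine second-order coefficients together with the square of the first-order term produce, via the Gauss equation, the combination of $H^2(y_0)$ and the scalar curvature $R_g(y_0)$. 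I would compute the inverse metric and the volume element $\sqrt{g}=1-H\!\cdot\! z+O(|z|^2)$ to second order for later substitution.

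Next I would take the cylindrically symmetric Euclidean extremal $w$ attaining $S_{N,\s}$ (existence and symmetry from Badiale--Tarantello \cite{BT} and Fabbri--Mancini--Sandeep \cite{FMS}), rescale it as $w_\e(y,z)=\e^{-(N-2)/2}w(y/\e,z/\e)$, multiply by a fixed cutoff $\chi$ supported in the coordinate patch, and use $u_\e:=w_\e\,\chi$ (read back into $\O$ through the Fermi chart) as test function. Expanding both the Dirichlet-plus-potential numerator and the weighted denominator of \eqref{eq:min-to-study} as $\e\to0$, the leading order reproduces $S_{N,\s}$; the odd-in-$z$ mean-curvature term integrates to zero by the symmetry of $w$, so the surviving correction is of order $\e^2$ (for $N\geq4$ this dominates all remainders). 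Collecting coefficients, I expect
\be
\frac{\displaystyle\int_\O |\n u_\e|^2\,dx+\int_\O h\,u_\e^2\,dx}{\displaystyle\left(\int_\O \rho_\G^{-\s}|u_\e|^{2^*_\s}\,dx\right)^{2/2^*_\s}}
= S_{N,\s}+\e^2\,\Theta\big(C^1_{N,\s}H^2(y_0)+C^2_{N,\s}R_g(y_0)+h(y_0)\big)+o(\e^2),
\ee
with $\Theta>0$ a universal constant absorbed into the $C^i_{N,\s}$. Hypothesis \eqref{eq:h-bound-main-th-11} then makes the bracket negative, so $\mu_{h,\s}(\O,\G)\leq(\text{quotient})(u_\e)<S_{N,\s}$ for $\e$ small, and the attainment criterion finishes the proof.

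The main obstacle is the curvature expansion of the energy. One must substitute the second-order metric, inverse metric and volume element into $\int|\n u_\e|^2\,dx$ and $\int\rho_\G^{-\s}|u_\e|^{2^*_\s}\,dx$, and match the weight $\rho_\G^{-\s}=|z|^{-\s}(1+O(|z|))$ against the precise decay of $w$. The delicate point is the bookkeeping of the $\e^2$ terms in higher codimension: several tangential and normal second-order contributions must be integrated against $w$ and $\n w$ using cylindrical symmetry, and one must verify that the singular weight does not destroy the integrability of the curvature integrals — this is exactly where the restrictions $N\geq4$ and $\s<2$ enter, and it also explains why no mass term (as in the case $N=3$ of Theorem~\ref{th:main1}) appears here. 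Isolating the clean combination $C^1_{N,\s}H^2(y_0)+C^2_{N,\s}R_g(y_0)$ with the correct universal constants is the technical core of the argument.
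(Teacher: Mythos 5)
Your overall strategy coincides with the paper's: first the compactness criterion (a minimizing sequence argument with a Brezis--Lieb decomposition and a localized Hardy--Sobolev inequality showing that $\mu_{h,\s}(\O,\G)<S_{N,\s}$ forces attainment, as in Proposition \ref{Pro3333}), then a test-function computation in Fermi coordinates around $y_0$ with the rescaled, cut-off extremal $w$, using the second-order expansion of $g$, $g^{-1}$ and $\sqrt{|g|}$ and the Gauss equation to package the $\e^2$-coefficient as $C^1_{N,\s}H^2(y_0)+C^2_{N,\s}R_g(y_0)+h(y_0)$ (Lemmas \ref{lem:to-prove}--\ref{Expansion} and Proposition \ref{Proposition2}). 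Two small points in your favor: in these coordinates $\rho_\G\circ F_{y_0}=|z|$ exactly, so no $O(|z|^2)$ correction to the weight needs tracking; and you correctly observe that the first-order mean-curvature term is odd in $z$ and vanishes by the cylindrical symmetry of $w$.

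There is, however, a genuine gap at $N=4$, which your sketch does not survive as written. You assert that the surviving correction is of order $\e^2$ and that ``for $N\geq4$ this dominates all remainders,'' with constants $C^i_{N,\s}$ given by ratios of convergent integrals of $w$. But the cutoff remainder is $O(\e^{N-2})$, which for $N=4$ is $O(\e^2)$ --- the \emph{same} order as your claimed correction, so nothing is gained. Worse, for $N=4$ the integrals defining your would-be constants diverge: $w(x)\sim|x|^{2-N}$ gives $\int_{\B_{r/\e}}w^2\,dx\sim|\log\e|$, and likewise $\int_{\B_{r/\e}}|z|^2|\n w|^2\,dx$ and $\int_{\B_{r/\e}}|t|^2|\n w|^2\,dx$ grow logarithmically, so the ``universal constant $\Theta$'' absorbed into $C^i_{N,\s}$ is not defined. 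The paper's resolution is precisely to exploit this divergence: the dominant correction is $\e^2|\log\e|\gg\e^{N-2}=\e^2$, and after reducing the curvature integrals to $\int w^2$ via a Pohozaev-type identity (the identities \eqref{eq:z-sqrt-w-2-star}, obtained by multiplying the Euler--Lagrange equation \eqref{ExpoEA} by $|z|^2\eta_\e w$ and $|t|^2\eta_\e w$) and sandwiching $\int_{\B_{r/\e}}w^2\,dx$ between $c|\log\e|$ and $\ov c|\log\e|$, one obtains a sufficient negativity condition with constants that depend on the signs of $H^2-3R_g(y_0)$ and $R_g(y_0)$, as listed in \eqref{Options}. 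Without this separate treatment your argument proves the theorem only for $N\geq5$.
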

The explicit  values of $C^1_{N,\s}$ and $C^2_{N,\s}$ appearing  in \eqref{eq:h-bound-main-th-11}  are given by weighted integrals involving partial derivatives of $w$, a minimizer for $S_{N,\s}$, see Proposition \ref{Proposition2} below.
When $k=1$ then $R_g(x_0)=0$. Hence $H=\kappa$, so that we recover Theorem \ref{th:main1}.\\

In the litterature several authors studied Hardy-Sobolev inequalites in domains of the Euclidean space and in Riemannian manifolds, see \cite{CL, DN, GK, GR, GR1, GR2, GR3, GR4, GY, Jaber1, Jaber2, LL} and references therein. For instance, we let $\O$ to be a smooth bounded domain of $\R^N$ with $0\in \O$ and consider the following Hardy-Sobolev constant
\begin{equation}\label{MinimizationProblem}
\mu_\s\left(\O\right):=\inf \left\lbrace \displaystyle\int_{\O} |\n u|^2 dx, \quad u\in H^1_0\left(\O\right)  \quad \textrm{and $\displaystyle\int_\O |x|^{-\s} |u|^{2^*_\s} dx=1$} \right\rbrace,
\end{equation}
with $\s\in[0,2)$. It is well known that the value of $\mu_\s\left(\O\right)$ is independent of $\O$ thanks to scaling invariant. Moreover $\mu_\s\left(\O\right)=S_{N,\s}$ given by \eqref{eq:Hard-Sob-sharp11} and it is not attained for all bounded domains, see Ghoussoub-Yuan \cite{GY} and Struwe \cite{Struwe}. However the situation changes when we add a little perturbation. For example, let $h$ be a continuous function on $\O$. Consider the following Hardy-Sobolev best constant
\begin{equation}\label{BrezisNirenbergProblem}
\mu_{h, \s}\left(\O\right):=\inf \left\lbrace \displaystyle\int_{\O} |\n u|^2 dx+\int_\O h u^2 dx, \quad u\in H^1_0\left(\O\right)  \quad \textrm{and $\displaystyle\int_\O |x|^{-\s} |u|^{2^*_\s} dx=1$} \right\rbrace.
\end{equation} 
 When $\s=0$, \eqref{BrezisNirenbergProblem} corresponds to the famous Brezis-Nirenberg problem (see \cite{BrezisNirenberg}) and when $\s=2$, this kind of problem was study by the author on compact Riemannian manifolds, see \cite{THIAM}.
In the non-singular case $(\s=0)$, authors in \cite{BrezisNirenberg} showed that, for $N\geq 4$ it is enough that $h(y_0)<0$ to get minimizer for some $y_0\in \O$.  While for $N=3$, the problem is no more local and existence of minimizers is guaranted by the positiveness of a certain mass-the trace of the regular part of the Green function of the operator $-\D+h$ with zero Dirichlet data, see \cite{Druet00, Druet11}. Related references for this Brezis-Nirenberg type problem are Druet \cite{Druet000}, Hebey-Vaugon \cite{HV1, HV2}, Egnell \cite{Egnell} and references therein.\\
When $\s=2$ and $h\equiv \l$ is  a real parameter and $\O$ is replaced by a compact Riemannian manifold, then  the author in \cite{THIAM} proved the existence of a threshold $\l^*\left(\O\right)$ such that the best constant in \eqref{BrezisNirenbergProblem} has a solution if and only if $\l<\l^*$. See also \cite{Thiam3}.\vspace{0.6cm}\\
A very interesting case in the litterature is when $0\in \de \O$. The result of the attainability for the Hardy-Sobolev best constant $\mu_\s\left(\O\right)$ defined in \eqref{MinimizationProblem} is quite different from that in the situation where $0\in \O$.
The fact that things may be different when $0\in \de \O$ first emerged in the paper of Egnell \cite{Egnell} where he considers open cones of the form $C=\lbrace x\in \R^N; x=r\theta, \theta \in D \textrm{ and $r>0$}\rbrace$ where the base $D$ is a connected domain of the unit sphere $S^{N-1}$ of $\R^N$. Egnell showed that $\mu_\s\left(C\right)$ is then attained for $0<s<2$ even when $\bar{C}\neq \R^N$.  
Later Ghoussoub and Kang in \cite{GK} showed that if all the principal curvatures of $\de \O$ at $0$ are negative then $\mu_\s\left(\O\right)<\mu_\s\left(\R^{N}_+\right)$
and it is achieved. Demyanov and Nazarov in \cite{DN} proved that the extremals for $\mu_\s\left(\O\right)$
exist when $\O$ is average concave in a neighborhood of the origin. Later Ghoussoub and Robert in \cite{GR1} proved the existence of extremals when the boundary is smooth and the mean curvature at $0$ is negative. For more results in this direction and generalizations, we refer to Ghoussoub-Robert \cite{GR2, GR3, GR4}, Chern-Lin \cite{CL}, Lin-Li \cite{LL}, Lin-Walade \cite{LW1, LW2}, the Fall, Minlend and the author \cite{FMT} and references therein.\\

The proof of Theorem \ref{MainResult} rely on test function methods. Namely to build appropriate test functions allowing to compare $\mu_{h,\s}(\O,\G)$ and $ S_{N,\s}$.  While it  always holds that $\mu_{h,\s}(\O,\G)\leq S_{N,\s}$, our main task is  to find a function for which  $\mu_{h,\s}(\O,\G)<S_{N,\s}$. This then allows to recover compactness and thus every minimizing sequence  for $ \mu_{h,\s}(\O,\G)$ has a subsequence which converges to   a minimizer. Building these approximates solutions requires to have sharp decay estimates of a  minimizer $w$ for $S_{N,\s}$, see Lemma \ref{cor:dec-est-w} below.    In Section \ref{FatouElaj}, we prove existence result when $\mu_{h,\s}\left(\O, \G\right)<S_{N,\s}$. In Section \ref{Section1}, we give some preliminaries results. In Section \ref{SectionCompare},  we  build continuous familly  of test functions $(u_\e)_{\e>0}$ concentrating at a point $y_0\in \G$ which yields $\mu_{h,\s}(\O,\G)<S_{N,\s}  $, as $\e\to 0$, provided \eqref{eq:h-bound-main-th-11} holds.\\
\section{Preliminaries }\label{Section1}
Let $\G\subset \R^N$ be  a smooth closed submanifold of dimension $k$ with $2\leq k\leq N-2$. For $y_0 \in \G$, we let $(E_1;\dots; E_k)$ be an orthonormal basis of $T_{y_0}\G$, the tangent space of $\G$ at $y_0$. For $r>0$ small, a neighborhood of $y_0\in \G$ can be parametrized by the mapping $f :B_{\R^k} (0,r) \to \G$ defined  by
$$
f\left(t\right):=\textrm{exp}_{y_0}^\G\left(\sum_{a=1}^k t_a E_a\right),
$$
where $\textrm{exp}_{y_0}^\G$ is the exponential map of $\G$ at $y_0$ and $B_{\R^k} (0,r)$ is the ball of $\R^k$ centered at $0$ and of radius $r$.
We choose a smooth   orthonormal frame field $\left(E_{k+1}(f(t));...;E_N(f(t))\right)$ on the normal bundle of $\G$ such that $\left(E_1\left(f(t)\right);...;E_N(f(t))\right) $ is an oriented basis of $\R^N$ for every $t\in B^k_r$, with $E_i(f(0))=E_i$. We fix the following notation, that will be used  a lot in the paper,
$$
 Q_r:=B_{\R^k} (0,r) \times B_{\R^{N-k}}(0,r) ,
$$
where $B_{\R^m}(0,r)$ denotes the ball in $\R^m$ with radius $r$ centered at the origin.
 Provided $r>0$  small, the map $F_{y_0}: Q_r\to \O$, given by 
\begin{equation}\label{Parametrization}
 (t,z)\mapsto  F_{y_0}(t,z):= f(t)+\sum_{i=2}^N z_i E_i(f(t)),
\end{equation}
is smooth and parameterizes a neighborhood of $y_0=F_{y_0}(0,0)$. We consider $\rho_\G:\O\to \R$ the distance function to the submanifold given by 
$$
\rho_\G(t)=\min_{\ov t\in \G}|t-\ov t|.
$$
In the above coordinates, we have 
\begin{equation}\label{eq:rho_Gamm-is-mod-z}
\rho_\G\left(F_{y_0}(x)\right)=|z| \qquad\textrm{ for every $x=(t,z)\in Q_r.$} 
\end{equation}
Since the basis $\lbrace E_i \rbrace_i$ is orthonormal, then for every $t\in B_{\R^k} (0,r)$; $a, b=1, \cdots, k$ and $i,j=k+1,\dots N$,  there exists real numbers $\G^i_{ab}\left(f(t)\right)$ and  $\b^i_{ja}(f(t))$ such that we can write
\begin{equation}\label{Def-Curvature}
dE_i \circ \frac{\de f}{\de t_a}=-\sum_{b=1}^k \Gamma^i_{ab} \frac{\de f}{\de t_b}+ \sum_{\stackrel{j=k+1}{i\neq j}} \b^i_{ja} E_j.
\end{equation}
The quantity $\G^i_{ab}\left(f(t)\right)$ and $\b^i_{ja}(f(t))$ are the second fundamental form and 
the "torsion" of $\G$. The norms of the second fundamental form and the mean curvature are then given respectively by
$$
\G:=\left(\sum_{ab=1}^k \sum_{i=k+1}^N \left(\Gamma^i_{ab}\right)^2\right)^{1/2} \qquad \textrm{and} \qquad H:=\left(\sum_{i=k+1}^N \left(\sum_{a=1}^k\Gamma^i_{aa}\right)^2\right)^{1/2}.
$$
We note that provided $r>0$ small, $\Gamma^i_{ab}$ and $\b^i_{ja}$ are smooth functions. Moreover, it is easy to see that 
\be\label{eq:tau-antisymm}
\b^i_{ja}(f(t))=-\b^j_{ia}(f(t)) \qquad\textrm{ for $i,j=2,\dots, N$ and $a=1, \cdots, k$ }.
\ee
Next, we derive the expansion of the metric induced by the parameterization $F_{y_0}$ defined above.
For $x=(t,z) \in Q_r$, we define 
$$
g_{ab}(x):=   {\de_{t_a} F_{y_0}}(x) \cdot   {\de_{t_b} F_{y_0}} (x) , \qquad g_{ai}(x):=   {\de_{t_a} F_{y_0}} (x)\cdot   {\de_{z_i} F_{y_0}}(x)
$$
and 
$$
g_{ij}(x):=   {\de_{z_j} F_{y_0}}(x) \cdot   {\de_{z_i} F_{y_0}}(x).
$$
Then we have the following
\begin{lemma}\label{MaMetric}
For any $a,b=1,\cdots, k$ and for any $i,j=k+1, \cdots,N$, we have
{\small
\begin{align*}
\displaystyle g_{ab}(x) &=\d_{ab}-2\sum_{i=k+1}^N z_i \Gamma^i_{ab}+\sum_{ij=k+1}^N \sum_{c=1}^k z_i z_j \Gamma^i_{ac} \Gamma^j_{bc}\\
&+\sum_{ij=k+1}^N \sum_{\stackrel{l=k+1}{\stackrel{l\neq i}{l\neq j}}}^N z_i z_j \b^i_{la} \b^j_{lb} -\frac{1}{3} \sum_{cd=1}^k R_{acbd}(x_0) t_c t_d+O\left(|x|^3\right)\\
 g_{ai}(x)&= \sum_{\stackrel{j=k+1}{j\neq i}}^N z_j \b^j_{ia}\qquad \textrm{and}\qquad 
 g_{ij}(x)=\d_{ij},
\end{align*}
}
where the curvature terms $\Gamma^i_{ab}$ and $\b_{ia}^j$ are computed at the point $f(t)$.
\end{lemma}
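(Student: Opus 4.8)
The plan is to read off the three families $g_{ab}$, $g_{ai}$, $g_{ij}$ directly from the parameterization \eqref{Parametrization}, expanding every vector in the orthonormal frame $(E_1,\dots,E_N)$ and sorting the resulting products by their order in $|x|=(|t|^2+|z|^2)^{1/2}$. Writing $f_a:=\de_{t_a}f$ for the coordinate tangent fields along $\Gamma$, differentiating \eqref{Parametrization} gives
\[
\de_{z_i}F_{y_0}=E_i(f(t)),\qquad \de_{t_a}F_{y_0}=f_a+\sum_{i=k+1}^N z_i\,\de_{t_a}E_i(f(t)),
\]
and the single structural input is the frame decomposition \eqref{Def-Curvature}, which splits $\de_{t_a}E_i$ into a tangential part $-\sum_b\Gamma^i_{ab}f_b$ and a normal part $\sum_{j\neq i}\b^i_{ja}E_j$. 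Orthonormality of the frame immediately yields $g_{ij}=E_i\cdot E_j=\delta_{ij}$, exactly, which is the third identity.

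For $g_{ai}$ I would use that $f_a$ and every $f_c$ ($c\le k$) are tangent to $\Gamma$ while $E_i$ with $i\geq k+1$ is normal, so that $f_a\cdot E_i=0$ and $f_c\cdot E_i=0$. Substituting \eqref{Def-Curvature} into $g_{ai}=\de_{t_a}F_{y_0}\cdot E_i$, the term $f_a\cdot E_i$ and the whole tangential part of each $\de_{t_a}E_l$ drop out, and only the $E_i$-coefficient $\b^l_{ia}$ of the normal part survives; this gives $g_{ai}=\sum_{l\neq i}z_l\,\b^l_{ia}$, again with no error term.

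The real content is $g_{ab}=\de_{t_a}F_{y_0}\cdot\de_{t_b}F_{y_0}$. Expanding the product produces four groups: the purely tangential $f_a\cdot f_b$; the two mixed terms linear in $z$; and the quadratic term $\sum_{ij}z_iz_j\,\de_{t_a}E_i\cdot\de_{t_b}E_j$. For the first I would invoke the classical Taylor expansion of the metric of $\Gamma$ in the geodesic normal coordinates $f=\exp^{\Gamma}_{y_0}$, namely $f_a\cdot f_b=\delta_{ab}-\frac13\sum_{cd}R_{acbd}(y_0)t_ct_d+O(|t|^3)$, which supplies the curvature term. In the two linear terms the normal parts of $\de_{t_b}E_j$ and $\de_{t_a}E_i$ are annihilated by $f_a\cdot E_l=0$, leaving $-\sum_j z_j\sum_c\Gamma^j_{bc}(f_a\cdot f_c)$ and its mirror; using $f_a\cdot f_c=\delta_{ac}+O(|t|^2)$ and the symmetry $\Gamma^i_{ab}=\Gamma^i_{ba}$ collapses these to $-2\sum_i z_i\Gamma^i_{ab}$ modulo $O(|z||t|^2)=O(|x|^3)$. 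In the quadratic term the two cross pairings between a $\Gamma$-tangential factor and a $\b$-normal factor vanish by tangent–normal orthogonality, the tangential–tangential pairing gives $\sum_c\Gamma^i_{ac}\Gamma^j_{bc}$ after $f_c\cdot f_d=\delta_{cd}+O(|t|^2)$, and the normal–normal pairing $\sum_{l\neq i}\sum_{m\neq j}\b^i_{la}\b^j_{mb}(E_l\cdot E_m)$ collapses via $E_l\cdot E_m=\delta_{lm}$ to $\sum_{l\neq i,\,l\neq j}\b^i_{la}\b^j_{lb}$. Summing the four groups gives exactly the stated expansion.

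I expect the only delicate point to be the bookkeeping of orders rather than any genuine difficulty. The essential observation is that the coefficients $\Gamma^i_{ab}(f(t))$ and $\b^i_{ja}(f(t))$ are kept as functions of $t$ and are \emph{not} re-expanded about $t=0$ — this is precisely what the closing clause of the statement records — so that replacing $f_a\cdot f_b$ by $\delta_{ab}$ inside the $z$-linear and $z$-quadratic terms produces only corrections of order $|z||t|^2$ or higher, safely absorbed into $O(|x|^3)$. The sole external ingredient is the normal-coordinate expansion of the induced metric on $\Gamma$; everything else is the elementary algebra of \eqref{Def-Curvature} together with the repeated use of tangent–normal orthogonality.
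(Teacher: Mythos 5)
Your proposal is correct and follows essentially the same route as the paper: differentiate the parameterization \eqref{Parametrization}, use \eqref{Def-Curvature} together with tangent--normal orthogonality to obtain $g_{ai}$ and $g_{ij}$, and combine the four groups in the expansion of $g_{ab}$ with the normal-coordinate expansion of the induced metric on $\G$ from \cite{Gray}. Your explicit tracking of the $O(|t|^2)$ corrections when replacing $f_a\cdot f_c$ by $\d_{ac}$ inside the $z$-linear and $z$-quadratic terms is a point the paper leaves implicit, but it does not change the argument.
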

\begin{proof}
We use the expression in \eqref{Parametrization} to get
\begin{equation}\label{Derives}
\frac{\de F}{\de t_a}= \frac{\de f}{\de t_a}+\sum_{i=k+1}^N z_i dE_i \circ \frac{\de f}{\de t_a} \qquad \textrm{and} \qquad \frac{\de F}{\de z_i}= E_i.
\end{equation}
Then using \eqref{Derives} and the fact that $\displaystyle\frac{\de f}{\de t_a} \in T_{f(t)} \G,$  we easily get
$$
g_{ai}(x)= \sum_{\stackrel{j=k+1}{j\neq i}}^N z_j \b^j_{ia} \qquad \textrm{and} \qquad g_{ij}(x)=\d_{ij}.
$$
We have also that
\begin{equation}\label{Hum3}
\begin{array}{ll}
\displaystyle g_{ab}(x)=\langle \frac{\de f}{\de t_a}, \frac{\de f}{\de t_b}\rangle&\displaystyle+\sum_{i=k+1}^N z_i \langle dE_i\circ \frac{\de f}{\de t_a}, \frac{\de f}{\de t_b}\rangle\\\
&\displaystyle+\sum_{i=k+1}^N z_i \langle dE_i\circ \frac{\de f}{\de t_b}, \frac{\de f}{\de t_a}\rangle+\sum_{ij=k+1}^N z_i z_j \langle dE_i\circ \frac{\de f}{\de t_a}, dE_j\circ \frac{\de f}{\de t_b}\rangle.
\end{array}
\end{equation}
The expansion of the induced  metric 
$
\tilde{g}_{ab}=\langle \frac{\de f}{\de t_a}, \frac{\de f}{\de t_b}\rangle
$
in the local chart of the exponential map is given by
\begin{equation}\label{Hum4}
\displaystyle\tilde{g}_{ab}(x)=\d_{ab}-\frac{1}{3} \sum_{cd=1}^k R_{acbd}(x_0) t_c t_d +O\left(|t|^3\right),
\end{equation}
where the $R_{acbd}$ are the components of the tensor curvature of $\G$, see \cite{Gray}.
We then plug \eqref{Hum4} in \eqref{Hum3} to get
$$
\begin{array}{ll}
\displaystyle g_{ab}(x)=&\displaystyle\d_{ab}-2\sum_{i=k+1}^N z_i \Gamma^i_{ab}+\sum_{ij=k+1}^N \sum_{c=1}^k z_i z_j \Gamma^i_{ac} \Gamma^j_{bc}\\\\
&\displaystyle+\sum_{ij=k+1}^N \sum_{\stackrel{l=k+1}{\stackrel{l\neq i}{l\neq j}}}^N z_i z_j \b^i_{la} \b^j_{lb} -\frac{1}{3} \sum_{cd=1}^k R_{acbd}(x_0) t_c t_d+O\left(|x|^3\right).
\end{array}
$$
This ends the proof.
\end{proof}
We will need the following result deduced from Lemma \ref{MaMetric}.
\begin{lemma}\label{DetInv}
In a small neighborhood of the point $y_0\in \O$ the expansion of the square root of the determinant of the metric is given by
\begin{equation}\label{DeterminantMetric}
\begin{array}{ll}
\displaystyle\sqrt{|g|}(x)=1-\sum_{i=k+1}^N z_i H^i&\displaystyle-\frac{1}{2}\sum_{ij=k+1}^N \sum_{ab=1}^k z_i z_j \Gamma^i_{ab} \Gamma^j_{ab}+\sum_{ij=k+1}^N z_i z_j H^i H^j\\\\
&\displaystyle-\frac{1}{6} \sum_{cd=1}^k Ric_{cd}(x_0) t_c t_d+O\left(|x|^3\right).
\end{array}
\end{equation}
Moreover the components of the inverse of the metric are 
\begin{equation}\label{InverseMetric}
\begin{array}{ll}
g^{ab}(x)&\displaystyle=\d_{ab}+2\sum_{i=k+1}^N z_i \Gamma^i_{ab}+3 \sum_{ij=k+1}^N \sum_{c=1}^k z_i z_j \Gamma^i_{ac} \Gamma^j_{bc}+\frac{1}{3} \sum_{cd=1}^k R_{acbd}(x_0) t_c t_d+O\left(|x|^3\right)\\\\
g^{ai}(x)&\displaystyle=-\sum_{j=k+1}^N z_j \b^j_{ia}-2\sum_{c=1}^k \sum_{lm=k+1}^N z_l z_m \Gamma^l_{ac} \Gamma^m_{ac}
+O\left(|x|^3\right)\\\\
g^{ij}(x)&\displaystyle=\d_{ij}+\sum_{c=1}^k \sum_{lm=k+1}^N z_l z_m \b^l_{ic} \b^m_{jc}+O\left(|x|^3\right).
\end{array}
\end{equation}
\end{lemma}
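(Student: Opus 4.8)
The plan is to regard the induced metric $g=(g_{\alpha\beta})$ of Lemma~\ref{MaMetric} as a perturbation of the identity, written in the block form
\[
g=\begin{pmatrix} P & Q\\ Q^{T} & I_{N-k}\end{pmatrix},
\qquad P=(g_{ab})_{a,b\le k},\quad Q=(g_{ai}),
\]
where the normal--normal block is \emph{exactly} $\delta_{ij}$. The structural facts I would exploit are that $Q=O(|z|)$ is purely first order, while $P=I+P_{1}+P_{2}+O(|x|^{3})$ with $P_{1}$ first order and $P_{2}$ second order, both read off directly from Lemma~\ref{MaMetric}. Two contraction identities will do the geometric bookkeeping: $\sum_{a=1}^{k}\Gamma^{i}_{aa}=H^{i}$ and $\sum_{a=1}^{k}R_{acad}(x_{0})=Ric_{cd}(x_{0})$, converting the second fundamental form and the curvature tensor of $\Gamma$ into the mean curvature vector and Ricci tensor of the statement.

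For the determinant I would use $\sqrt{\det(I+M)}=\exp\!\big(\tfrac12\operatorname{tr}\log(I+M)\big)$ with $M=g-I$, giving
\[
\sqrt{|g|}=1+\tfrac12\operatorname{tr}M-\tfrac14\operatorname{tr}(M^{2})+\tfrac18(\operatorname{tr}M)^{2}+O(|x|^{3}).
\]
Since the normal block of $M$ vanishes identically, $\operatorname{tr}M$ only sees the tangential diagonal $g_{aa}-1$; its first order part yields $\tfrac12\operatorname{tr}M=-\sum_i z_iH^i$ through $\sum_a\Gamma^i_{aa}=H^i$, and its second order part produces the $-\tfrac16\,Ric_{cd}t_ct_d$ term via the Ricci contraction. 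The quadratic pieces $\operatorname{tr}(M^{2})$ and $(\operatorname{tr}M)^{2}$ recombine with $\operatorname{tr}P_2$ to give the remaining $\Gamma\Gamma$ and $H^iH^j$ coefficients; here the off--diagonal first order entries $g_{ai}$ enter through $\operatorname{tr}(M^{2})$ and, after relabelling the normal indices, cancel the $\beta\beta$ contribution coming from $\operatorname{tr}P_2$, which is why no torsion term survives in $\sqrt{|g|}$.

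For the inverse I would invert by the Schur complement of the exact identity block,
\[
g^{-1}=\begin{pmatrix} (P-QQ^{T})^{-1} & -(P-QQ^{T})^{-1}Q\\[2pt] -Q^{T}(P-QQ^{T})^{-1} & I+Q^{T}(P-QQ^{T})^{-1}Q\end{pmatrix}.
\]
As $QQ^{T}=O(|z|^{2})$, the Neumann expansion $(P-QQ^{T})^{-1}=I-P_{1}-(P_{2}-QQ^{T})+P_{1}^{2}+O(|x|^{3})$ yields $g^{ab}$: the first order term is $+2\sum_i z_i\Gamma^i_{ab}$, while at second order the combination $-P_2+P_1^{2}$ produces the coefficient $3$ in front of $\sum z_iz_j\Gamma^i_{ac}\Gamma^j_{bc}$ (a single unit with sign $-$ from $-P_2$ against four units from $P_1^{2}$) together with $+\tfrac13 R_{acbd}t_ct_d$, and the $\beta\beta$ part of $-P_2$ is annihilated by $QQ^{T}$. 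Next $g^{ai}=-(P-QQ^{T})^{-1}Q=-Q+P_1Q+O(|x|^{3})$ supplies the first order $\beta$--term and, from $P_1Q$, the second order $\Gamma\beta$ correction; finally $g^{ij}=\delta_{ij}+(Q^{T}Q)_{ij}+O(|x|^{3})$ reproduces precisely $\sum_c\sum_{lm}z_lz_m\beta^l_{ic}\beta^m_{jc}$.

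The routine part is the substitution of Lemma~\ref{MaMetric} into these formulas. The one real hazard---and the step I would carry out most carefully---is tracking the quadratic terms so that the numerical coefficients come out correctly, and in particular verifying the two torsion cancellations (the $\beta\beta$ term in both $\sqrt{|g|}$ and $g^{ab}$) through the relabelling of normal indices, which relies on the symmetry $\Gamma^i_{ab}=\Gamma^i_{ba}$ and the antisymmetry \eqref{eq:tau-antisymm} of $\beta^i_{ja}$.
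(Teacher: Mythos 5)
Your overall strategy is the same as the paper's: substitute the expansion of Lemma \ref{MaMetric} into the Taylor expansion of $\sqrt{\det(I+M)}$ and into a Neumann series for $g^{-1}$, using $\sum_a\Gamma^i_{aa}=H^i$ and $\sum_a R_{acad}=Ric_{cd}$, and checking the two torsion cancellations. Organizing the inverse by the Schur complement of the identity normal block instead of writing $g^{-1}=I-A-B+A^2+O(|x|^3)$ as the paper does is a cosmetic repackaging; term by term ($-P_1$, $-P_2+P_1^2+QQ^{T}$, $P_1Q$, $Q^{T}Q$) it produces exactly the same contributions as the paper's $-A_{ab}$, $-B_{ab}+\sum_cA_{ac}A_{bc}+\sum_iA_{ai}A_{bi}$, etc., including the coefficient $3$ on the $\Gamma\Gamma$ term and the $\beta\beta$ cancellation in $g^{ab}$.

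There is, however, one point where your proposal does not deliver what it claims. You use the correct expansion
$\sqrt{\det(I+M)}=1+\tfrac12\operatorname{tr}M-\tfrac14\operatorname{tr}(M^{2})+\tfrac18(\operatorname{tr}M)^{2}+O(|M|^{3})$,
whereas the paper's proof uses $\tfrac14(\operatorname{tr}M)^2$ in place of $\tfrac18(\operatorname{tr}M)^2$. Since the first-order part of $\tfrac12\operatorname{tr}M$ is $-\sum_i z_iH^i$, your (correct) coefficient $\tfrac18$ yields $\tfrac12\sum_{ij}z_iz_jH^iH^j$, not the $\sum_{ij}z_iz_jH^iH^j$ asserted in \eqref{DeterminantMetric}; only the $\Gamma\Gamma$, $\beta\beta$ and Ricci terms are unaffected by this choice. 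So your statement that the quadratic pieces ``recombine to give the remaining $H^iH^j$ coefficient'' cannot be right as written: either you silently switch to the paper's $\tfrac14$ (which is not the Taylor expansion you quoted) or you obtain a conclusion differing from the lemma by a factor $\tfrac12$ on that one term. You should carry this term out explicitly and state which coefficient you actually get. A smaller remark: your second-order term in $g^{ai}$ is a $\Gamma\beta$ product ($-2\sum_c\sum_{lm}z_lz_m\Gamma^l_{ac}\beta^m_{ic}$ from $P_1Q$), which is indeed what the computation produces; the $\Gamma^l_{ac}\Gamma^m_{ac}$ printed in \eqref{InverseMetric} does not depend on $i$ and appears to be a misprint, and in any case this term is only ever used to accuracy $O(|x|^3)$ later on.
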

\begin{proof}
We can write $g(x)=I+A$. Then we have the classical expansion
\begin{equation}\label{DetFormula}
\sqrt{\det\left(I+A\right)}(x)=1+\frac{\tr A}{2}+\frac{\left(\tr A\right)^2}{4}-\frac{\tr \left(A^2\right)}{4}+O\left(|A|^3\right),
\end{equation}
where we have
\begin{equation}\label{H1}
\frac{tr A}{2} =
-\sum_{i=k+1}^N z_i H^i+\frac{1}{2}\sum_{ij=k+1}^N \sum_{ab=1}^k z_i z_j \Gamma^i_{ab} \Gamma^j_{ab}+\frac{1}{2}\sum_{\stackrel{ijl=k+1}{\stackrel{l\neq i}{l\neq j}}}^N z_i z_j \b^i_{la} \b^j_{la}
-\frac{1}{6} \sum_{cd=1}^k Ric_{cd}(x_0) t_c t_d+O\left(|x|^3
\right)
\end{equation}
and 
\begin{equation}\label{H2}
\frac{\left(tr A\right)^2}{4}=\sum_{ij=k+1}^N z_i z_j H^i H^j+O\left(|x|^3\right),
\end{equation}
where for $i=k+1, \cdots, N$ we have
$$
H^i=\sum_{a=1}^k \Gamma^i_{aa}
$$
 are the components of the mean curvature of $\G$. Moreover using the fact that the matrix $A$ is symmetric, we get
$$
tr A^2=\sum_{\a=1}^N \left(A^2\right)_{\a\a}=\sum_{\a=1}^N \left(\sum_{\b=1}^N A_{\a\b}(x) A_{\a\b}(x)\right).
$$
Then
\begin{align*}
-\frac{tr A^2}{4}&\displaystyle=-\frac{1}{4} \left(\sum_{ab=1}^k A_{ab}^2(x)+2\sum_{a=1}^k \sum_{i=k+1}^N A_{ai}^2(x)+
\sum_{ij=k+1}^N A_{ij}^2(x)
\right).
\end{align*}
Therefore
\begin{equation}\label{H3}
-\frac{tr A^2}{4}=-\sum_{ij=k+1}^N \sum_{ab=1}^k z_i z_j \Gamma^i_{ab} \Gamma^j_{ab}-\frac{1}{2} \sum_{a=1}^k \sum_{\stackrel{ijl=k+1}{\stackrel{i\neq l}{j\neq l}}}^N z_i z_j \b^i_{la} \b^j_{la}.
\end{equation}
By \eqref{DetFormula}, \eqref{H1}, \eqref{H2} and \eqref{H3}, we finally obtain
\begin{align*}
\sqrt{|g|}(x)=1&-\sum_{i=k+1}^N z_i H^i-\frac{1}{2}\sum_{ij=k+1}^N \sum_{ab=1}^k z_i z_j \Gamma^i_{ab} \Gamma^j_{ab}\\\
&+\sum_{ij=k+1}^N z_i z_j H^i H^j
-\frac{1}{6} \sum_{cd=1}^k Ric_{cd}(x_0) t_c t_d+O\left(|x|^3\right).
\end{align*}
We write
$$
g(x)=I+A(x)+B(x)+O\left(|x|^3\right),
$$
where $A$ and $B$ are symmetric matrix given by
$$
A_{ab}(x)=-2 \sum_{i=k+1}^N z_i \Gamma^i_{ab}; \qquad A_{ai}(x)= \sum_{j=k+1}^N z_j \b^j_{ia} \qquad \textrm{and} \qquad A_{ij}(x)=0
$$
and
$$
B_{ab}(x)=\sum_{ij=k+1}^N \sum_{c=1}^k z_i z_j \Gamma^i_{ac} \Gamma^j_{bc}+\sum_{ij=k+1}^N \sum_{\stackrel{l=k+1}{\stackrel{l\neq i}{l\neq j}}}^N z_i z_j \b^i_{la} \b^j_{lb} -\frac{1}{3} \sum_{cd=1}^k R_{acbd}(x_0) t_c t_d
$$
and
$$
B_{ai}(x)=B_{ij}(x)=0.
$$
It's clear that the inverse of the metric $g^{-1}$ is given by
$$
g^{-1}(x)=I-A(x)-B(x)+A^2(x)+O\left(|x|^3\right).
$$
This yields
\begin{align*}
g^{ab}(x)&\displaystyle=\d_{ab}-A_{ab}(x)-B_{ab}(x)+\sum_{c=1}^k A_{ac}(x) A_{bc}(x)+\sum_{i=k+1}^N A_{ai}(x) A_{bi}(x)+O\left(|x|^3\right)\\\\
g^{ai}(x)&\displaystyle=-A_{ai}(x)-B_{ai}(x)+\sum_{c=1}^k A_{ac}(x) A_{ic}(x)+\sum_{j=k+1}^N A_{aj}(x) A_{ij}(x)+O\left(|x|^3\right)\\\\
g^{ij}(x) &\displaystyle=\d_{ij}-A_{ij}(x)-B_{ij}(x)+\sum_{c=1}^k A_{ic}(x) A_{jc}(x)+\sum_{l=k+1}^N A_{il}(x) A_{jl}(x)+O\left(|x|^3\right).
\end{align*}
Hence we obtain that
\begin{equation}\label{HV}
\begin{array}{ll}
g^{ab}(x)&\displaystyle=\d_{ab}+2\sum_{i=k+1}^N z_i \Gamma^i_{ab}+3 \sum_{ij=k+1}^N \sum_{c=1}^k z_i z_j \Gamma^i_{ac} \Gamma^j_{bc}+\frac{1}{3} \sum_{cd=1}^k R_{acbd}(x_0) t_c t_d+O\left(|x|^3\right)\\\\
g^{ai}(x)&\displaystyle=-\sum_{j=k+1}^N z_j \b^j_{ia}-2\sum_{c=1}^k \sum_{lm=k+1}^N z_l z_m \Gamma^l_{ac} \Gamma^m_{ac}+O\left(|x|^3\right)\\\\
g^{ij}(x)&\displaystyle=\d_{ij}+\sum_{c=1}^k \sum_{lm=k+1}^N z_l z_m \b^l_{ic} \b^m_{jc}+O\left(|x|^3\right).
\end{array}
\end{equation}
This ends the proof of the lemma.
\end{proof}
We consider  the  best constant for the cylindrical Hardy-Sobolev inequality   
$$
S_{N,\s}= \min \left\{ \int_{\R^N} |\n w|^2 dx\,:\,w\in \cD^{1,2}(\R^N),\,    \int_{\R^N} |z|^{-\s} |w|^{2^*_\s} dx=1 \right\}.
$$
As mentioned in the first section,  it is   attained  by a positive function $w\in \cD^{1,2}(\R^N)$, satisfying
\begin{equation}\label{ExpoEA}
-\D w=S_{N,\s} |z|^{-\s} w^{2^*_\s-1} \qquad \textrm{ in } \R^N,
\end{equation}
see e.g. \cite{BT}. Moreover from  \cite{FMS}, we have   
\begin{equation}\label{eq:AE}
w(x)=w(t,z)=\theta\left(|t|, |z|\right) \qquad \textrm{ for a function} \qquad \theta:\R_+ \times \R_+ \to \R_+.
\end{equation}
We will need the following preliminary result in the sequel.
\begin{lemma}\label{cor:dec-est-w}
Let $w$ be a ground state for $S_{N,\s}$ then there exist positive   constants $C_1,C_2$, only depending on $N$ and $\s$, such that 
\item[(i)] For every $x\in \R^N$  
  \begin{equation}\label{DecayEstimates111}
\frac{C_1}{1+|x|^{N-2}}\leq w(x) \leq \frac{C_2}{1+|x|^{N-2}} .
\end{equation}

\item[(ii)] For   $|x|= |(t,z)|\leq 1$  
$$ 
|\n w (x)|+ |x| |D^2 w (x)|\leq C_2 |z|^{1-\s}
$$
\item[(iii)] For   $|x|= |(t,z)|\geq 1$ 
$$
|\n w(x)|+ |x| |D^2 w(x)|\leq C_2 \max(1, |z|^{-\s})|x|^{1 -N}.
$$
\end{lemma}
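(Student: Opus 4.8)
The plan is to combine global boundedness and two-sided decay of $w$ with interior elliptic estimates performed on balls whose radius is comparable to the distance to the singular set $\Sigma:=\{z=0\}=\R^k\times\{0\}$, on which the weight $|z|^{-\s}$ blows up.

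\emph{Step 1 (regularity and boundedness).} First I would show $w\in L^\infty(\R^N)$ and $w\in C^2(\R^N\setminus\Sigma)$. Away from $\Sigma$ the weight is smooth and positive, so $w$ is as regular as standard Schauder theory allows; the only genuine issues are the behaviour near $\Sigma$ and at infinity. Local boundedness up to $\Sigma$ comes from a Moser iteration adapted to \eqref{ExpoEA}: testing with truncated powers of $w$ and using the Hardy--Sobolev inequality \eqref{Hardy-Sobolev11} to absorb the singular weight. Here one uses crucially that $\s<2\le N-k$, so that $|z|^{-\s}$ is locally integrable across $\Sigma$ and the iteration closes, yielding $\sup_{B_1(\xi)}w\le C$ for all $\xi\in\R^N$.

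\emph{Step 2 (proof of (i)).} For the upper bound at infinity I would use the Kelvin transform $\tilde w(x):=|x|^{2-N}w(x/|x|^2)$. A direct computation, using $2^*_\s=\frac{2(N-\s)}{N-2}$ (equivalently $(2-N)(2^*_\s-1)=2\s-N-2$), shows that $\tilde w$ solves the same equation \eqref{ExpoEA} on $\R^N\setminus\{0\}$; since the Kelvin transform is an isometry of $\cD^{1,2}(\R^N)$ and $\{0\}$ has zero capacity, $\tilde w$ is again a positive $\cD^{1,2}$ solution, to which Step 1 applies near the origin. Thus $\sup_{B_1}\tilde w\le C$, which unwinds to $w(y)\le C|y|^{2-N}$ for $|y|\ge1$, and together with $\sup_{B_1}w\le C$ gives the upper bound. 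For the lower bound, $w$ is positive and superharmonic, and the Riesz representation $w(x)=c_N\int_{\R^N}|x-y|^{2-N}g(y)\,dy$, with $g:=S_{N,\s}|z|^{-\s}w^{2^*_\s-1}\ge 0$, yields $w(x)\ge c|x|^{2-N}$ for large $|x|$ upon restricting the integral to $B_1(0)$; the strong maximum principle and a Harnack inequality handle compact sets.

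\emph{Step 3 (proof of (ii) and (iii)).} Fix $x=(t,z)$ with $z\neq 0$ and set $r:=\tfrac12|z|=\tfrac12\,\dist(x,\Sigma)$, so that $B_r(x)\cap\Sigma=\emptyset$ and $|z'|\simeq|z|$ on $B_r(x)$; there $g$ is smooth with $|g|\lesssim |z|^{-\s}\big(\sup_{B_r(x)}w\big)^{2^*_\s-1}$. Rescaling $w$ to unit scale on $B_r(x)$ and invoking the interior gradient and Schauder estimates for $-\D$ converts the bounds of (i) on $\sup_{B_r(x)}w$, together with the bound on $g$, into control of $|\n w(x)|$ and $|x|\,|D^2w(x)|$; the power $|z|^{1-\s}$ in (ii) and the factor $|z|^{-\s}$ in (iii) arise precisely from the $|z|^{-\s}$ weight entering the rescaled estimate, while the symmetry \eqref{eq:AE} organises the $t$- and $z$-derivatives, and the decay from (i) distinguishes the two regimes $|x|\le 1$ and $|x|\ge 1$. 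The main obstacle is the behaviour at $\Sigma$: establishing boundedness of both $w$ and $\tilde w$ up to $\Sigma$ in Steps 1--2 is the technical heart, since the singular weight must be absorbed through \eqref{Hardy-Sobolev11} in the iteration; and in Step 3 one must track the exact power of $|z|$ produced by the rescaled estimates, as $w$ is only H\"older (not $C^2$) across $\Sigma$.
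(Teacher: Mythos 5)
The paper does not actually prove this lemma: it states that part (i) is proved in \cite{FMS} and parts (ii)--(iii) in \cite{Fall-Thiam}, so there is no in-text argument to compare against. Your Steps 1--2 are a faithful reconstruction of the standard route to (i) (Brezis--Kato/Moser iteration across $\Sigma=\{z=0\}$ using the Hardy--Sobolev inequality, then the Kelvin transform, which indeed preserves the equation since $(N-2)(2^*_\s-1)=N+2-2\s$, plus the Newtonian-potential lower bound), and that is essentially what \cite{FMS} does.

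There is, however, a genuine gap in Step 3. The scaled interior estimate on $B_r(x)$ with $r=\tfrac12|z|$, fed with the \emph{sup} bound from (i), gives
$|\n w(x)|\le C\bigl(r^{-1}\sup_{B_r(x)}w+r\sup_{B_r(x)}|g|\bigr)\le C\bigl(|z|^{-1}+|z|^{1-\s}\bigr)$,
and the $|z|^{-1}$ term dominates $|z|^{1-\s}$ (since $1-\s>-1$); likewise the scaled Schauder estimate gives $|D^2w|\lesssim |z|^{-2}$, not $|z|^{-\s}$. So the argument as you describe it does not deliver the stated powers, and the claim that ``the power $|z|^{1-\s}$ arises precisely from the $|z|^{-\s}$ weight entering the rescaled estimate'' hides the missing ingredient. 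To close this you need either (a) a preliminary sharp modulus of continuity across $\Sigma$, namely $\operatorname*{osc}_{B_r(x_0)}w\le C\left(r+r^{2-\s}\right)$ for $x_0\in\Sigma$, so that the \emph{oscillation} form of the interior gradient estimate yields $r^{-1}\operatorname*{osc}\lesssim 1+|z|^{1-\s}$, or (b) the decomposition $w=h+\mathcal{N}[g\chi_{B_1(x_0)}]$ on a unit ball centred on $\Sigma$, with $h$ harmonic (hence with bounded derivatives there) and the Newtonian potential estimated directly via $\int|x-y|^{1-N}|z_y|^{-\s}\,dy\lesssim 1+|z|^{1-\s}$ and its second-derivative analogue. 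Note also that either repair produces $1+|z|^{1-\s}$ rather than $|z|^{1-\s}$; for $\s<1$ the bound in (ii) as literally stated would force $\n w=0$ on $\Sigma$, which the tangential derivative $\n_t w=\theta_1\,t/|t|$ has no reason to satisfy, so the additive constant is not removable (and is harmless in all the applications in Sections \ref{FatouElaj} and \ref{SectionCompare}).
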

Fabbri, Mancini and Sandeep proved $(i)$ in \cite{FMS}. The proof of $(ii)$ and $(iii)$ are done by the Fall and the author in \cite{Fall-Thiam}.
%
%
%
\section{Existence Result}\label{FatouElaj}
 Let $\O$ be a bounded domain of $\R^N$, $N\geq 3$, and $h$ a continuous function on $\O$. Let  $\G$ be a smooth closed submanifold contained in $\O$.  We consider 
\be \label{eq:mu-h-O-G-appendix}
\mu_{h,\s}(\O,\G):=\inf_{ u\in  H^1_0(\O)  }\frac{ \displaystyle \int_{\O} |\nabla u|^2 dx +\int_\O h u^2 dy }{  \displaystyle \left(  \int_{\O} \rho_\G^{-\s} |u|^{2^*_\s} dy \right)^{\frac{2}{2^*_\s}}}.
\ee
We also recall  that
\be\label{eq:S-N-sig-appendix}
S_{N,\s}=\inf_{  v\in \cD^{1,2}(\R^N)  }\frac{ \displaystyle  \int_{\R^N} |\nabla v|^2 dx}{   \displaystyle \left(  \int_{\R^N} |z|^{-\s} |v|^{2^*_\s} dx \right)^{\frac{2}{2^*_\s}} },
%
\ee
with $x=(t,z)\in \R\times \R^{N-1}$.
  Our aim in this section is to show   that   { if } $\mu_{h,\s}\left(\O,\G \right)< S_{N,\s}$   then the best constant  $\mu_{h,\s}\left(\O,\G\right)$    is achieved. The argument of proof is standard. However, for sake of completeness, we add the proof. We start with the following
\begin{lemma}\label{OuiOui}
Let $\O$ be an open subset of $\R^N$, with $N\geq 3$, and let $\G\subset \O$ be a smooth closed submanifold.
Then for every $r>0$, there exist  positive constants $  c_r>0$, only  depending on $\O,\G,N,\s$ and $r$, such that  for every  $u \in H^1_0(\O)$  
$$
S_{N,\s} \left(\int_{\O} \rho^{-\s}_\G |u|^{2^*_\s} dy\right)^{2/2^*_\s} \leq(1+ r) \int_{\O} |\n u|^2dy+c_r \left[\int_\O u^2 dy+\left(\int_\O |u|^{2^*_\s} dy\right)^{2/2^*_\s}\right],
$$
where $2^*_\s=\frac{2(N-\s)}{N-2}$ and $\s\in (0, 2)$.
\end{lemma}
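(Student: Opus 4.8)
The plan is to localize the functional near and away from $\G$ and to transfer the sharp inequality \eqref{eq:S-N-sig-appendix} from $\R^N$ to a thin tubular neighborhood of $\G$, paying only a multiplicative loss $(1+r)$ that shrinks with the neighborhood. The elementary but decisive point is that $2^*_\s/2=(N-\s)/(N-2)\geq 1$ for $\s\in(0,2)$, so the \emph{triangle inequality} is available in $L^{2^*_\s/2}$; this is what will let me split the weighted integral over a covering without inflating the constant in front of $\int_\O|\n u|^2$.

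First I would fix $r>0$ and a small $\delta>0$ to be chosen at the end. Since $\G$ is compact, I can cover a tubular neighborhood $\G_\delta:=\{\rho_\G<\delta\}$ by finitely many Fermi charts $F_{y_1},\dots,F_{y_m}$ as in \eqref{Parametrization}, each of coordinate size comparable to $\delta$, and fix a smooth partition of unity $\alpha_0,\alpha_1,\dots,\alpha_m$ subordinate to $\{\O\setminus\overline{\G_{\delta/2}}\}\cup\{F_{y_j}(Q_{r_j})\}_j$, normalized so that $\sum_{j=0}^m\alpha_j^2\equiv 1$ (one obtains this from any smooth partition $\beta_j$ by setting $\alpha_j=\beta_j/(\sum_l\beta_l^2)^{1/2}$, which keeps the $\alpha_j$ smooth with gradients bounded by some $C_\delta$). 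Writing $w:=\rho_\G^{-2\s/2^*_\s}u^2\geq 0$, one has $(\int_\O\rho_\G^{-\s}|u|^{2^*_\s}\,dy)^{2/2^*_\s}=\|w\|_{L^{2^*_\s/2}(\O)}$, and since $w=\sum_j\alpha_j^2 w$ the $L^{2^*_\s/2}$ triangle inequality gives
$$
\Big(\int_\O\rho_\G^{-\s}|u|^{2^*_\s}\,dy\Big)^{2/2^*_\s}\leq\sum_{j=0}^m\Big(\int_\O\rho_\G^{-\s}|\alpha_j u|^{2^*_\s}\,dy\Big)^{2/2^*_\s}.
$$

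Next I would estimate each summand. For $j=0$ the support of $\alpha_0$ lies in $\{\rho_\G\geq\delta/2\}$, where $\rho_\G^{-\s}\leq(\delta/2)^{-\s}$, so that term is $\leq C_\delta(\int_\O|u|^{2^*_\s}\,dy)^{2/2^*_\s}$ and is absorbed into $c_r\|u\|_{L^{2^*_\s}}^2$. For $j\geq1$ I push $\alpha_j u$ forward through $F_{y_j}$ to a compactly supported $v_j\in\calD^{1,2}(\R^N)$; by \eqref{eq:rho_Gamm-is-mod-z} the weight becomes exactly $|z|^{-\s}$, while Lemma \ref{MaMetric} and Lemma \ref{DetInv} give $\sqrt{|g|}=1+O(|x|)$ and $g^{lm}=\d^{lm}+O(|x|)$, hence $\sqrt{|g|}=1+O(\delta)$ and $g^{lm}\xi_l\xi_m\geq(1-C\delta)|\xi|^2$ on $Q_{r_j}$. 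Applying $S_{N,\s}(\int_{\R^N}|z|^{-\s}|v_j|^{2^*_\s})^{2/2^*_\s}\leq\int_{\R^N}|\n v_j|^2$ and converting the flat integrals back to $\O$ through these two metric comparisons yields
$$
S_{N,\s}\Big(\int_\O\rho_\G^{-\s}|\alpha_j u|^{2^*_\s}\,dy\Big)^{2/2^*_\s}\leq(1+C\delta)\int_\O|\n(\alpha_j u)|^2\,dy\qquad(j\geq1).
$$

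Finally I would assemble the pieces. Summing over $j\geq1$ and adding the $j=0$ bound produces an upper estimate $(1+C\delta)\sum_{j=1}^m\int_\O|\n(\alpha_j u)|^2\,dy+c_r\|u\|_{L^{2^*_\s}}^2$. Expanding $|\n(\alpha_j u)|^2=\alpha_j^2|\n u|^2+2\alpha_j u\,\n\alpha_j\cdot\n u+u^2|\n\alpha_j|^2$ and summing over all $j$, the normalization $\sum_j\alpha_j^2\equiv1$ collapses the first term to $|\n u|^2$ and annihilates the cross term, since $\sum_j\alpha_j\n\alpha_j=\tfrac12\n(\sum_j\alpha_j^2)=0$; the last term is $\leq C_\delta u^2$. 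Thus $\sum_{j=1}^m\int_\O|\n(\alpha_j u)|^2\,dy\leq\int_\O|\n u|^2\,dy+C_\delta\int_\O u^2\,dy$, and choosing $\delta=\delta(r)$ with $C\delta\leq r$ gives
$$
S_{N,\s}\Big(\int_\O\rho_\G^{-\s}|u|^{2^*_\s}\,dy\Big)^{2/2^*_\s}\leq(1+r)\int_\O|\n u|^2\,dy+c_r\Big[\int_\O u^2\,dy+\Big(\int_\O|u|^{2^*_\s}\,dy\Big)^{2/2^*_\s}\Big],
$$
with $c_r$ depending only on $\O,\G,N,\s,r$. The hard part will be the \emph{uniform} metric comparison: the chart radii must be taken small enough (depending on the curvature of $\G$) that $\sqrt{|g|}-1$ and $g^{lm}-\d^{lm}$ are controlled by $r$ on every chart at once, and it is exactly the combination of the $L^{2^*_\s/2}$ triangle inequality with the $\sum_j\alpha_j^2\equiv1$ normalization that keeps the overlaps of the charts from pushing the coefficient of $\int_\O|\n u|^2\,dy$ above $(1+r)$.
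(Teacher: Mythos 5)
Your argument is correct, and it follows the same overall strategy as the paper: cover a tubular neighborhood of $\G$ by finitely many Fermi charts, split $u$ with a partition of unity, flatten each piece via $F_{y_j}$ so that the weight becomes $|z|^{-\s}$ exactly, apply the sharp constant $S_{N,\s}$ in $\R^N$ at the cost of a factor $1+O(\d)$ coming from the metric expansions of Lemmas \ref{MaMetric} and \ref{DetInv}, and absorb the piece supported away from $\G$ into the $L^{2^*_\s}$ term. Where you genuinely differ is in the partition-of-unity bookkeeping. The paper takes $\sum_i\vp_i=1$, sets $\psi_i=\vp_i^{1/2^*_\s}u$, splits the weighted norm using subadditivity of $t\mapsto t^{2/2^*_\s}$, and controls the cross term $u\,\n\vp_i^{1/2^*_\s}\cdot\n u$ by Young's inequality at the cost of an extra factor $(1+\e)$; in its final summation it bounds $\vp_i^{2/2^*_\s}\le 1$ and writes $\sum_i\int_{T^{y_i}_r}|\n u|^2\,dy\le\int_\O|\n u|^2\,dy$, which as stated overlooks the multiplicity of the covering (on overlaps $\sum_i\vp_i^{2/2^*_\s}>1$, so one strictly picks up the covering number in front of the gradient term). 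Your normalization $\sum_j\a_j^2\equiv 1$, combined with the triangle inequality in $L^{2^*_\s/2}$ (legitimate precisely because $2^*_\s/2=(N-\s)/(N-2)\ge 1$), makes $\sum_j\a_j^2|\n u|^2$ collapse exactly to $|\n u|^2$ and annihilates the cross terms through $\sum_j\a_j\n\a_j=\tfrac12\n\bigl(\sum_j\a_j^2\bigr)=0$, so neither Young's inequality nor any overlap count is needed. This is the cleaner route, and it repairs the one delicate point in the paper's version of the summation while yielding the same conclusion with the same dependence of $c_r$ on $\O,\G,N,\s,r$.
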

\begin{proof}
We let $r>0$ small. We can cover a tubular neighborhood of $\G$ by a finite number of  sets $\left(T^{y_i}_r\right)_{1\leq i \leq m}$  given by 
$$
T^{y_i}_r:=F_{y_i}\left(Q_r\right), \qquad \textrm{ with $y_i\in\G$. }
$$
We refer to  Section \ref{Section1} for the parameterization  $F_{y_i}:Q_r\to \O$. 
See e.g. \cite[Section 2.27]{Aubin}, there exists  $\left(\vp_i\right)_{1\leq i \leq m}$  a   partition of unity subordinated to this covering such that
\begin{equation}\label{eq:Part-unity}
\sum_i^{m } \vp_i  =1 \qquad \textrm{and} \qquad |\n \vp_i ^{\frac{1}{2^*_\s}} |\leq K \qquad \textrm{ in } U:=\displaystyle \cup_{i=1}^{m} T^{y_i}_r,
\end{equation}
for some constant $K>0$.
We define
\be \label{eq:def-psi_i}
\psi_i(y):=\vp_i^{\frac{1}{2^*_\s}} (y) u(y) \qquad \textrm{ and } \qquad \ti{\psi_i}(x)=\psi_i (F_{y_i}(x)).
\ee
Recall that $\rho_\G\geq C>0$ on  $\O \setminus U$, for some positive constant $C>0$. Therefore,  since $\frac{2}{2^*_\s}<1$, by \eqref{eq:def-psi_i} we get
\begin{align} \label{eq:Sum-append}
\displaystyle\left(\int_\O \rho^{-\s}_\G |u|^{2^*_\s} dy+\right)^{2/2^*_\s} 
&\displaystyle\leq   \left(\int_U \rho^{-\s}_\G \left| u \right|^{2^*_\s} dy\right)^{2/2^*_\s}+ \left(\int_{\O\setminus U} \rho^{-\s}_\G |u|^{2^*_\s} dy\right)^{2/2^*_\s}\nonumber\\
&\displaystyle \leq \left( \sum_i^{m} \int_{T^{y_i}_r} \rho^{-\s}_\G  |\psi_i|^{2^*_\s} dy\right)^{2/2^*_\s}+c_r  \left(\int_\O |u|^{2^*_\s} dy\right)^{2/2^*_\s} \nonumber\\
&\displaystyle \leq\sum_i^{m} \left(  \int_{T^{y_i}_r} \rho^{-\s}_\G  |\psi_i|^{2^*_\s} dy\right)^{2/2^*_\s}+c_r  \left(\int_\O |u|^{2^*_\s} dy\right)^{2/2^*_\s}.
\end{align}
By  change of variables and Lemma \ref{DetInv}, we have
\begin{align*}
\left(\int_{T^{y_i}_r} \rho^{-\s}_\G |\psi_i|^{2^*_\s} dy\right)^{2/2^*_\s} &=\left(\int_{Q_r} |z|^{-\s} |\ti{\psi}_i|^{2^*_\s} \sqrt{|g|}(x) dx\right)^{2/2^*_\s}\\
&\leq \left(1+c r\right) \left(\int_{Q_r} |z|^{-\s} |\ti{\psi}_i|^{2^*_\s}  dx\right)^{2/2^*_\s}.
\end{align*}
In addition by the Hardy-Sobolev best constant \eqref{eq:S-N-sig-appendix}, we have
$$
S_{N,\s}  \left(\int_{Q_r} |z|^{-\s} |\ti{\psi}_i|^{2^*_\s}  dx\right)^{2/2^*_\s}\leq  \left( \int_{Q_r} |\n \ti{\psi}_i|^2 dx \right)^{2/2}.
$$
Therefore  by change of variables and Lemma \ref{DetInv},  we get
\begin{align*}
S_{N,\s}&\left(\int_{T^{y_i}_r} \rho^{-\s}_\G |\psi_i|^{2^*_\s} dy\right)^{2/2^*_\s}  \leq \left(1+cr\right)\int_{Q_r} |\n \ti{\psi}_i|^2 dx\\
&\leq \left(1+ c' r\right) \int_{T^{y_i}_r} |\n (\phi_i^{\frac{1}{2^*_\s}}  u)|^2 dy=  \left(1+ c'r\right) \int_{T^{y_i}_r}|\phi_i^{\frac{1}{2^*_\s}}\n u+ u \n \phi_i^{\frac{1}{2^*_\s}}|^2 dy+ c_r  \int_\O | u|^2 dy.
\end{align*}
Applying Young's inequality   using \eqref{eq:Part-unity} and \eqref{eq:def-psi_i}, we find that
\begin{align*}
S_{N,\s}\left(\int_{T^{y_i}_r} \rho^{-\s}_\G |\psi_i|^{2^*_\s} dy\right)^{2/2^*_\s}& \leq   \left(1+ c' r\right)(1+\e) \int_{T^{y_i}_r} \phi_i^{\frac{2}{2^*_\s}}|\n u|^2 dy+ c_r(\e)  \int_\O | u|^2 dy\\
&\leq   \left(1+ c' r\right)(1+\e) \int_{T^{y_i}_r} |\n u|^2 dy+ c_r(\e)  \int_\O | u|^2 dy.
\end{align*}
Summing for $i$ equal $1$ to $m$,  we get 
\begin{align*}
S_{N,\s}  \sum_{i=1}^{m}\left(\int_{T^{y_i}_r} \rho^{-\s}_\G |\psi_i|^{2^*_\s} dy\right)^{2/2^*_\s}\leq     \left(1+ c' r\right)(1+\e)    \left( \int_{\O} |\n u|^2 dy\right)^{1/2}+ c_r(\e)  \left(\int_\O | u|^2 dy\right)^{1/2}.
\end{align*}
This together with \eqref{eq:Sum-append} give
$$
S_{N,\s} \left(\int_\O \rho^{-\s}_\G |u|^{2^*_\s} dy\right)^{2/2^*_\s}\leq  \left(1+ c' r\right)(1+\e)   \int_{\O} |\n u|^2 dy + c_r(\e)  \int_\O | u|^2 dy +c_r \left(\ \int_\O |u|^{2^*_\s} dy\right)^{2/2^*_\s}.
$$
Since $\e$ and $r$ can be  chosen arbitrarily small, we get the desired result.
\end{proof}
We can now prove the following existence result.
\begin{proposition}\label{Pro3333}
Consider $\mu_{h,\s}(\O,\G)$ and $S_{N,\s}$ given by \eqref{eq:mu-h-O-G-appendix} and \eqref{eq:S-N-sig-appendix} respectively.
Suppose that 
\begin{equation}\label{Assumption}
\mu_{h,\s}\left(\O,\G\right)<S_{N,\s}.
\end{equation}
Then $\mu_{h,\s}\left(\O,\G\right) $ is achieved by a positive function.
\end{proposition}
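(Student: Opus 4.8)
The plan is to apply the direct method of the calculus of variations, with the strict gap \eqref{Assumption} serving precisely to rule out the loss of compactness of a minimizing sequence. First I would fix a minimizing sequence $(u_n)\subset H^1_0(\O)$ for $\mu_{h,\s}(\O,\G)$, normalized by $\int_\O \rho_\G^{-\s}|u_n|^{2^*_\s}\,dy=1$, so that $\int_\O |\n u_n|^2\,dy+\int_\O h u_n^2\,dy\to \mu_{h,\s}(\O,\G)$. Since $-\D+h$ is coercive, this energy controls $\|u_n\|_{H^1_0(\O)}$, so $(u_n)$ is bounded and, along a subsequence, $u_n\weak u$ weakly in $H^1_0(\O)$, $u_n\to u$ a.e., and $u_n\to u$ strongly in $L^2(\O)$ by Rellich. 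The decisive structural observation is that for $\s\in(0,2)$ one has $2<2^*_\s<\tfrac{2N}{N-2}$, so $2^*_\s$ is subcritical in the usual Sobolev sense; hence the embedding $H^1_0(\O)\embed L^{2^*_\s}(\O)$ is compact and $u_n\to u$ strongly in $L^{2^*_\s}(\O)$ as well.

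Next I would set $v_n:=u_n-u$, so $v_n\weak 0$, and invoke the Brezis--Lieb lemma both for the Dirichlet energy and for the weighted nonlinearity (the latter in the measure $\rho_\G^{-\s}\,dy$, which is legitimate since $(u_n)$ is bounded in $L^{2^*_\s}(\rho_\G^{-\s}dy)$ and converges a.e.):
\be
\int_\O|\n u_n|^2\,dy=\int_\O|\n u|^2\,dy+\int_\O|\n v_n|^2\,dy+o(1),\qquad 1=a+\int_\O \rho_\G^{-\s}|v_n|^{2^*_\s}\,dy+o(1),
\ee
where $a:=\int_\O \rho_\G^{-\s}|u|^{2^*_\s}\,dy$ and $b:=\lim_n\int_\O \rho_\G^{-\s}|v_n|^{2^*_\s}\,dy$, so $a+b=1$. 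Using $\int_\O h u_n^2\to\int_\O h u^2$ and the definition of $\mu_{h,\s}(\O,\G)$ applied to $u$, the energy splits as $\mu_{h,\s}(\O,\G)\geq \mu_{h,\s}(\O,\G)\,a^{2/2^*_\s}+\lim_n\int_\O|\n v_n|^2\,dy$. To control the concentration term I would apply Lemma \ref{OuiOui} to $v_n$: since $v_n\to 0$ in both $L^2(\O)$ and $L^{2^*_\s}(\O)$ by the compact subcritical embedding, every $c_r$ correction vanishes in the limit, and letting $r\to 0$ gives
\be
S_{N,\s}\,b^{2/2^*_\s}\leq \lim_n\int_\O|\n v_n|^2\,dy .
\ee

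Combining the two estimates yields $\mu_{h,\s}(\O,\G)\geq \mu_{h,\s}(\O,\G)\,a^{2/2^*_\s}+S_{N,\s}\,b^{2/2^*_\s}$. Coercivity forces $\mu_{h,\s}(\O,\G)>0$, and since $2/2^*_\s<1$ one has $a^{2/2^*_\s}+b^{2/2^*_\s}\geq a+b=1$. If $b>0$, then using $S_{N,\s}>\mu_{h,\s}(\O,\G)$ from \eqref{Assumption} I would obtain $\mu_{h,\s}(\O,\G)\geq \mu_{h,\s}(\O,\G)\,a^{2/2^*_\s}+S_{N,\s}\,b^{2/2^*_\s}>\mu_{h,\s}(\O,\G)\big(a^{2/2^*_\s}+b^{2/2^*_\s}\big)\geq \mu_{h,\s}(\O,\G)$, a contradiction. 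Hence $b=0$, $a=1$, so $u\neq 0$, all inequalities collapse to equalities, $\n v_n\to 0$ in $L^2$, and $u$ attains $\mu_{h,\s}(\O,\G)$. For positivity, replacing $u$ by $|u|$ leaves all three integrals unchanged, so $|u|$ is also a minimizer solving $-\D|u|+h|u|=\mu_{h,\s}(\O,\G)\,\rho_\G^{-\s}|u|^{2^*_\s-1}$ with $|u|\geq 0$, $|u|\not\equiv 0$; the strong maximum principle (Harnack) then gives $|u|>0$ in $\O$. I expect the genuine obstacle to be exactly the exclusion of a positive concentration mass $b$, which is where Lemma \ref{OuiOui}, the compactness of $H^1_0(\O)\embed L^{2^*_\s}(\O)$, and the strict gap \eqref{Assumption} are used in an essential way; a secondary technical point is to justify the weighted Brezis--Lieb splitting despite the singularity of $\rho_\G^{-\s}$ along $\G$.
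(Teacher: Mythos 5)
Your proposal is correct and follows essentially the same route as the paper: normalized minimizing sequence, weak convergence plus the subcritical strong convergence in $L^2$ and $L^{2^*_\s}$, the Brezis--Lieb splitting of the weighted constraint, Lemma \ref{OuiOui} applied to $v_n=u_n-u$ to bound the concentrated mass by $S_{N,\s}$, and the concavity of $t\mapsto t^{2/2^*_\s}$ together with the strict gap \eqref{Assumption} to force $b=0$. The only differences are cosmetic bookkeeping (your $a+b=1$ contradiction versus the paper's direct inequality chain ending with Fatou), so no further comparison is needed.
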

\begin{proof}
Let $\left(u_n\right)_{n\in \N}$ be a minimizing sequence for $\mu_{h,\s}\left(\O,\G\right)$ normalized so that
\begin{equation}\label{Minimization}
\int_\O \rho^{-\s}_\G |u|^{2^*_\s} dx=1 \quad \textrm{ and } \quad \mu_{h,\s}\left(\O,\G\right)=\int_\O |\n u_n|^2 dx +\int_\O h u_n^2 dx+o(1).
\end{equation}
By coercivity of $-\D+ h$, the sequence $\left(u_n\right)_{n\in \N}$ is bounded in $H^1_0(\O)$ and thus , up to a subsequence, 
$$
u_n \rightharpoonup u \qquad\textrm{weakly in } H^1_0(\O),
$$
and
\be \label{eq:-un-to-u-strong}
u_n \to  u \qquad \textrm{ strongly in } L^p(\O)\quad \textrm{for}\quad 1\leq p < 2^*_0:=\frac{2N}{N-2}. 
\ee
The weak convergence  in  $H^1_0(\O)$ implies that
\begin{equation}\label{SB1}
\int_\O |\n u_n|^2 dx= \int_\O |\n(u_n-u)|^2 dx+\int_\O |\n u|^2 dx+o(1).
\end{equation}
By Brezis-Lieb lemma \cite{BL} and the strong convergence in the Lebesgue spaces $L^p(\O)$, we have
\begin{equation}\label{SB2}
1=\int_\O \rho^{-\s}_\G |u_n|^{2^*_\s} dx=\int_\O \rho^{-\s}_\G|u-u_n|^{2^*_\s} dx+ \int_\O \rho^{-\s}_\G |u|^{2^*_\s} dx+o(1).
\end{equation}
By Lemma \ref{OuiOui}, \eqref{eq:-un-to-u-strong} ---note that $2^*_\s< 2^*_0$ , we then deduce that
\begin{equation}\label{SB3}
S_{N,\s} \left(\int_\O \rho^{-\s}_\G |u-u_n|^{2^*_\s} dx\right)^{2/2^*_\s} \leq(1+r) \int_\O|\n (u-u_n)|^2 dx+o(1).
\end{equation}
Using \eqref{SB1}, \eqref{SB2} and \eqref{SB3}, we have
\begin{align}\label{eq:near-exist}
\displaystyle S_{N,\s}\left(1-\int_\O \rho^{-\s}_\G |u|^{2^*_\s} dx\right)^{2/2^*_\s} &\displaystyle\leq \left(1+r\right) \left(\int_\O |\n u_n|^2 dx -\int_\O |\n u|^2 dx\right)+o(1) \nonumber\\
&\displaystyle= \left(1+r\right) \left(\mu_{h,\s}\left(\O,\G\right)-\int_\O h u_n^2 dx -\int_\O |\n u|^2 dx\right)+o(1) \nonumber\\
&\displaystyle= \left(1+r\right) \left(\mu_{h,\s}\left(\O,\G\right)-\int_\O h u^2 dx -\int_\O |\n u|^2 dx\right)+o(1) \nonumber\\
&\displaystyle\leq \left(1+r\right) \mu_{h,\s}\left(\O,\G\right)\left(1-\left(\int_\O \rho^{-\s}_\G |u|^{2^*_\s} dx\right)^{2/2^*_\s}\right)+o(1).
\end{align}
By  the concavity of the map $t\mapsto t^{2/2^*_\s}$ on $[0,1]$,  we have 
$$
 1 \leq \left(1-\int_\O \rho^{-\s}_\G |u|^{2^*_\s} dx\right)^{2/2^*_\s}+ \left(\int_\O \rho^{-\s}_\G |u|^{2^*_\s} dx\right)^{2/2^*_\s}.
$$
From this,  
then taking the limits respectively as $n \to +\infty$ and as $r \to 0$ in \eqref{eq:near-exist}, we find that
$$
\left[S_{N,\s}-\mu_{h,\s}(\O,\G) \right]\left(1-\left(\int_\O \rho^{-\s}_\G |u|^{2^*_\s} dx\right)^{2/2^*_\s}\right)\leq 0.
$$
Thanks to  \eqref{Assumption},  we then  get 
$$
1 \leq \int_\O \rho^{-\s}_\G |u|^{2^*_\s} dx .
$$
Since by \eqref{Minimization}  and Fatou's lemma,
$$
1=\int_\O \rho^{-\s}_\G |u_n|^{2^*_\s} dx \geq \int_{\O} \rho^{-\s}_\G |u|^{2^*_\s} dx,
$$
we conclude that
$$
\int_\O \rho^{-\s}_\G |u|^{2^*_\s} dx=1.
$$
It then follows from \eqref{Minimization} that $u_n\to u$ in $L^{2^*_\s}(\O;\rho_\G^{-\s})$ and  thus $u_n\to u$ in $H^1_0(\O)$.
Therefore $u$ is a minimizer for $\mu_{h,\s}(\O,\G) $.  Since $|u|$  is also a minimizer for $\mu_{h,\s}(\O,\G)$, we may assume that $u \gneqq0$. Therefore $u>0$ by the maximum principle.
\end{proof}

\section{Comparing $S_{N,\s}$ and $\mu_{h, \s}\left(\O, \G\right)$}\label{SectionCompare}
\begin{lemma}\label{lem:to-prove}
Let $v\in \cD^{1,2}(\R^N)$, $N\geq 3,$ satisfy $v(t,z)=\Theta(|t|,|z|)$, for some some function $\Theta: \R_+\times \R_+\to \R$. Then  for $0<r<R$, we have 
\begin{align*}
 \int_{\B_{R}\setminus \B_{r}}|\n v|^2_{g}\sqrt{|g|} dx= &\int_{\B_{R}\setminus \B_{r}}|\n v|^2  dx+\frac{3\G^2-2H^2}{k(N-k)}\int_{\B_R\setminus \B_r} |z|^2 |\n_tv|^2 dx\nonumber\\\\
 &+\frac{R_g(x_0)}{3k^2} \int_{\B_R\setminus \B_r} |t|^2 |\n_t v|^2 dx+\frac{H^2-\G^2/2}{N-k}\int_{\B_R\setminus \B_r} |z|^2 |\n v|^2 dx\\\\
 &-\frac{R_g(x_0)}{6k} \int_{\B_R\setminus \B_r} |t|^2 |\n v|^2 dx+O\left(\int_{\B_R\setminus \B_r} |x|^3 |\n v|^2 dx\right).
\end{align*}
\end{lemma}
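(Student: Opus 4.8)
The plan is to write the metric gradient in the coordinates $(t,z)$ of Section \ref{Section1} as the sum of a tangential, a mixed and a normal block,
\[
|\n v|_g^2 = \sum_{ab} g^{ab}\,\partial_{t_a}v\,\partial_{t_b}v + 2\sum_{ai} g^{ai}\,\partial_{t_a}v\,\partial_{z_i}v + \sum_{ij} g^{ij}\,\partial_{z_i}v\,\partial_{z_j}v,
\]
then to insert the expansions of $g^{ab},g^{ai},g^{ij}$ and of $\sqrt{|g|}$ from Lemma \ref{DetInv}, expand the product $|\n v|_g^2\sqrt{|g|}$ up to second order in $x=(t,z)$, and integrate over $\B_R\setminus\B_r$. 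The whole computation is driven by the cylindrical symmetry \eqref{eq:AE}: writing $\partial_{t_a}v=\Theta_{|t|}\,t_a/|t|$ and $\partial_{z_i}v=\Theta_{|z|}\,z_i/|z|$, every gradient bilinear is even in $t$ and in $z$ separately, which is what makes the averaging arguments below available.

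First I would dispose of the odd and the mixed terms. Each first order contribution carries a single factor $z_i$ (from the $\Gamma^i_{ab}$ term of $g^{ab}$ and from the $-\sum_i z_iH^i$ term of $\sqrt{|g|}$) multiplying an even function of $z$, hence integrates to zero. More importantly, every term of the mixed block $2\sum_{ai}g^{ai}\partial_{t_a}v\partial_{z_i}v$ contains exactly one tangential derivative $\partial_{t_a}v\propto t_a$, so it is odd in $t$ and drops out after integration; this is precisely where the torsion coefficients $\b^j_{ia}$ of $g^{ai}$ disappear. The only remaining torsion term sits in the $z^2$ part of $g^{ij}$, and after the $z$-average it collapses to a difference of two equal sums by the antisymmetry \eqref{eq:tau-antisymm}, hence vanishes. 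Thus no torsion survives, consistent with its absence from the statement.

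For the surviving second order terms I would use the rotational averages on the annulus,
\[
\int_{\B_R\setminus\B_r} z_iz_j\,\Phi = \frac{\d_{ij}}{N-k}\int_{\B_R\setminus\B_r} |z|^2\Phi,\qquad \int_{\B_R\setminus\B_r} t_ct_d\,\Phi = \frac{\d_{cd}}{k}\int_{\B_R\setminus\B_r} |t|^2\Phi,
\]
together with the direction average $\partial_{t_a}v\,\partial_{t_b}v\to\frac{\d_{ab}}{k}|\n_t v|^2$. Collecting: the quadratic part of $\sqrt{|g|}$ multiplies the full $|\n v|^2$ and, using $\sum_i(H^i)^2=H^2$, $\sum_{iab}(\Gamma^i_{ab})^2=\G^2$ and $\sum_c Ric_{cc}=R_g$, produces $\frac{H^2-\G^2/2}{N-k}|z|^2|\n v|^2-\frac{R_g}{6k}|t|^2|\n v|^2$; the two $z^2$ terms of $g^{ab}$ (the $3\Gamma^i_{ac}\Gamma^j_{bc}$ term, and the product of the linear term $2z_i\Gamma^i_{ab}$ with $-\sum_j z_jH^j$) give, after $\sum_a\Gamma^i_{aa}=H^i$, the combination $\frac{3\G^2-2H^2}{k(N-k)}|z|^2|\n_t v|^2$; finally the $\tfrac13\sum_{cd}R_{acbd}t_ct_d$ term of $g^{ab}$ reduces through $\sum_c R_{acbc}=Ric_{ab}$ and $\sum_a Ric_{aa}=R_g$ to $\frac{R_g}{3k^2}|t|^2|\n_t v|^2$. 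All cubic remainders are absorbed into $O\big(\int_{\B_R\setminus\B_r}|x|^3|\n v|^2\big)$, yielding the claimed identity.

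The main obstacle is the curvature bookkeeping in this final collection step. Tracking which index contractions of $\Gamma^i_{ab}$ collapse to $H^2$ versus $\G^2$, and checking that both the Riemann term of $g^{ab}$ and the Ricci term of $\sqrt{|g|}$ reduce to the scalar curvature $R_g$ with the stated constants, is the delicate part. In particular the averaging of the $R_{acbd}t_ct_d$ block against the direction dependent factor $\partial_{t_a}v\,\partial_{t_b}v$ must be handled carefully, since the factors $t_c,t_d$ and the tangential gradient are not independent of the $t$-direction. Everything else is parity together with the two rotational averages above.
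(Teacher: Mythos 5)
Your proposal follows essentially the same route as the paper's proof: the same three-block decomposition of $|\n v|^2_g\sqrt{|g|}$ using the expansions of Lemma \ref{MaMetric} and Lemma \ref{DetInv}, the same use of parity in $t$ and the antisymmetry \eqref{eq:tau-antisymm} to reduce the mixed block and the normal block to $O\left(\int|x|^3|\n v|^2\right)$ remainders, and the same rotational averages to collect the $\G^2$, $H^2$ and curvature coefficients. The one step you flag as delicate --- averaging $R_{acbd}t_ct_d$ against $\de_{t_a}v\,\de_{t_b}v$ --- is carried out in the paper by exactly the independent averages $t_ct_d\to\frac{\d_{cd}}{k}|t|^2$ and $\frac{t_at_b}{|t|^2}\to\frac{\d_{ab}}{k}$ that you describe, so your argument is no less complete than the paper's own.
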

\begin{proof}
It is easy to see that
\begin{align}\label{Exp00}
\int_{\B_{R}\setminus \B_{r}}|\n v|^2_{g}\sqrt{|g|} dx= \int_{\B_{R}\setminus \B_{r}}|\n v|^2  dx&+  \int_{\B_{R}\setminus \B_{r}} (|\n v|^2_{g} -|\n v|^2)\sqrt{|g|} dx\\\\
&+ \int_{\B_{R}\setminus \B_{r}}|\n v|^2 (\sqrt{|g|} -1)dx.
\end{align}
We recall that
\begin{align*}
|\n v|^2_{g}(x) -|\n v|^2(x)=  \sum_{\a\b=1}^N\left[ g^{\a\b}( x)- \d_{\a\b} \right] \de_{z_\a} v(x) \de_{z_\b} v(x).
\end{align*}
It then follows that 
\begin{align}\label{Exp0}
 \int_{\B_{R}\setminus \B_{r}}  \left[ |\n v|^2_{g} -|\n v|^2 \right]\sqrt{|g|}dx =& \sum_{ij=k+1}^N \int_{\B_{R}\setminus \B_{r}} \left[g^{ij}-\d_{ij}\right] \de_{z_i} v   \de_{z_j} v   \sqrt{|g|} dx  \nonumber\\
 &+2\sum_{a=1}^k\sum_{i=2}^N\int_{\B_{R}\setminus \B_{r}} g^{ia}\left(\de_{t_a} v   \de_{z_i} v \right) \sqrt{|g|}  dx  \\
 &+ \sum_{ab=1}^k\int_{\B_{R}\setminus \B_{r}} [g^{ab} -\d_{ab}]\left(\de_{t_a} v \de_{t_b} v  \right)\sqrt{|g|} dx  \nonumber.
\end{align}
We first use  Lemma \ref{MaMetric}, Lemma \ref{DetInv} and \eqref{eq:tau-antisymm}, to get
\begin{align}\label{Exp1}
 \displaystyle \sum_{ij=k+1}^N \int_{\B_{R} \setminus \B_{r}   } &\left[g^{ij} -\d_{ij}\right] \de_{z_i} v  \de_{z_j} v   \sqrt{|g|} \,dx\nonumber\\
& =\sum_{ij=k+1}^N \int_{\B_{R} \setminus \B_{r}} \biggl(\sum_{c=1}^k \sum_{lm=k+1}^N \b^l_{ic} \b_{jc}^m z_l z_m+O\left(|x|^3\right)\biggl) \frac{z_i z_j}{|z|^2} |\n_z v|^2 dx
\nonumber\\
&=\sum_{c=1}^k \sum_{ij=k+1}^N \int_{\B_{R} \setminus \B_{r}   } \b^j_{ic} \b^j_{ic} \frac{z_i^2 z_j^2}{|z|^2} |\n_z v|^2 dx\nonumber\\
&+\sum_{c=1}^k \sum_{{\stackrel{ij=k+1}{i\neq j}}}^N \int_{\B_{R} \setminus \B_{r}   } \b^i_{ic} \b^j_{jc} \frac{z_i^2 z_j^2}{|z|^2} |\n_z v|^2 dx\nonumber\\
&+\sum_{c=1}^k \sum_{{\stackrel{ij=k+1}{i\neq j}}}^N \int_{\B_{R} \setminus \B_{r}   } \b^j_{ic} \b^i_{jc} \frac{z_i^2 z_j^2}{|z|^2} |\n_z v|^2 dx
 +O\left( \int_{\B_{R }\setminus \B_{r}} |x|^3 |\n_z v|^2 dx\right).
 \nonumber\\
&=O\left( \int_{\B_{R }\setminus \B_{r}} |x|^3 |\n_z v|^2 dx\right).
\end{align}
Using again Lemma \ref{MaMetric} and Lemma \ref{DetInv}, it easy follows that
\begin{align}\label{Exp2}
\sum_{a=1}^k\sum_{i=2}^N\int_{\B_{R}\setminus \B_{r}} g^{ia}\left(\de_{t_a} v   \de_{z_i} v \right) \sqrt{|g|}  dx&=\sum_{a=1}^k\sum_{i=2}^N\int_{\B_{R}\setminus \B_{r}} g^{ia}\left(\n_t v \cdot  \n_ zv \right) z_i t_a \sqrt{|g|}  dx\nonumber\\
&=O\left(\int_{\B_{R }\setminus \B_{r}} |x|^3 |\n v|^2 dx\right)
\end{align}
By Lemma \ref{MaMetric} and Lemma \ref{DetInv}, we then have
\begin{align*}
&\sum_{ab=1}^k\int_{\B_{R}\setminus \B_{r}} [g^{ab} -\d_{ab}]\left(\de_{t_a} v \de_{t_b} v  \right)\sqrt{|g|} dx=\sum_{ab=1}^k\int_{\B_{R}\setminus \B_{r}} \biggl[-2\sum_{ij=k+1}^N z_i z_j H^i \Gamma^j_{ab}\\\
&+3 \sum_{ij=k+1}^N \sum_{c=1}^k z_i z_j \Gamma^i_{ac \Gamma^j_{bc}}+\frac{1}{3} \sum_{cd=1}^k R_{acbd}(x_0) t_c t_d+O\left(|x|^3\right)\biggl] \frac{t_a t_b}{|t|^2} |\n_t v|^2 dx.
\end{align*}
Therefore
\begin{align}\label{Exp3}
\sum_{ab=1}^k\int_{\B_{R}\setminus \B_{r}} [g^{ab} &-\d_{ab}]\left(\de_{t_a} v \de_{t_b} v  \right)\sqrt{|g|} dx=\frac{3\G^2-2H^2}{k(N-k)}\int_{\B_R\setminus \B_r} |z|^2 |\n_tv|^2 dx\nonumber\\\
&+\frac{R_g(x_0)}{3k^2} \int_{\B_R\setminus \B_r} |t|^2 |\n_t v|^2 dx+O\left(\int_{\B_R\setminus \B_r} |x|^3 |\n_t v|^2 dx\right)
\end{align}
By Lemma \ref{DetInv}, we have
\begin{align}\label{Exp4}
\int_{\B_{R}\setminus \B_{r}}|\n v|^2 (\sqrt{|g|} -1)dx&=\frac{H^2-\G^2/2}{N-k}\int_{\B_R\setminus \B_r} |z|^2 |\n v|^2 dx\\\
&-\frac{R_g(x_0)}{6k} \int_{\B_R\setminus \B_r} |t|^2 |\n v|^2 dx+O\left(\int_{\B_R\setminus \B_r} |x|^3 |\n v|^2 dx\right)
\end{align}
The result follows from \eqref{Exp00}, \eqref{Exp0}, \eqref{Exp1}, \eqref{Exp2}, \eqref{Exp3} and \eqref{Exp4}. This then ends the proof.
\end{proof}
%
We consider $\O$ a bounded domain of $\R^N$, $N\geq3$, and $\G\subset \O$ be a smooth closed submanifold of dimension $k$ with $2\leq k\leq N-2$.  For $u\in H^1_0(\O)\setminus\{0\}$, we define the ratio
\be\label{eq:def-J-u} 
J\left(u\right):=\frac{\displaystyle\int_\O |\n u |^2 dy+\int_\O h u^2 dy}{\left(\displaystyle\int_\O \rho^{-\s}_\G|u|^{2^*_\s} dy\right)^{2/2^*_\s}}.
\ee
We let 
$\eta \in \calC^\infty_c\left({Q}_{2r}\right)$ be such that
$$
0\leq \eta \leq 1 \quad \textrm{ and }\quad \eta \equiv 1 \quad \textrm{in }  \B_r .
$$
For $\e>0$, we consider  $u_\e: \O \to  \R$ given  by
\begin{equation}\label{eq:TestFunction-w}
u_\e(y):=\e^{\frac{2-N}{2}} \eta(F^{-1}_{y_0}(y)) w \left(\e^{-1} {F^{-1}_{y_0}(y)}  \right).
\end{equation}
In particular,  for every $x=(t,z)\in \R^k\times \R^{N-k}$, we have 
\begin{equation}\label{eq:TestFunction-th}
u_\e\left(F_{y_0}(x)\right):=\e^{\frac{2-N}{2}}\eta\left(x\right)\th \left( {|t|}/{\e}, {|z|}/{\e} \right).
\end{equation}
It is clear that $u_\e \in H^1_0(\O).$ Then we have the following expansion.
\begin{lemma}\label{Expansion}
For $J$  given by \eqref{eq:def-J-u}  and $u_\e$ given by \eqref{eq:TestFunction-w}, as $\e\to0$, we have 
\begin{align}\label{eq:expans-J-u-eps} 
\displaystyle J(u_\e)=S_{N,\s}&\displaystyle+\e^2\frac{H^2-3R_g(x_0)}{k(N-k)}\int_{\B_{r/\e}} |z|^2 |\n_t w|^2 dx+\e^2\frac{R_g(x_0)}{3k^2} \int_{\B_{r/\e}} |t|^2 |\n_t w|^2 dx\nonumber\\\\
 &\displaystyle+\e^2\frac{H^2+R_g(x_0)}{2(N-k)}\int_{\B_{r/\e}} |z|^2 |\n w|^2 dx-\e^2\frac{R_g(x_0)}{6k} \int_{\B_{r/\e}} |t|^2 |\n w|^2 dx\nonumber\\\\
&\displaystyle+\e^2 \frac{H^2+R_g(x_0)}{2^*_\s(N-k)}S_{N,\s} \int_{\B_{r/\e}} |z|^{2-\s} w^{2^*_\s} dx-\e^2 \frac{R_g(x_0)}{2^*_\s (3k)} S_{N,\s}\int_{\B_{r/\e}} |t|^2 |z|^{-\s} w^{2^*_\s} dx \nonumber\\\\
&+  \e^2  h(y_0) \int_{\B_{r/\e}} w^2 dx+O\left( \e^2  \int_{\B_{r/\e}} |h(F_{y_0}(\e x)- h(y_0)| w^2 dx\right)+O\left(\e^{N-2}\right)\nonumber.
\end{align}
 
\end{lemma}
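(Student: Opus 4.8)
The plan is to pull every integral back to the coordinates $(t,z)$ furnished by the parameterization $F_{y_0}$ of Section \ref{Section1}, in which $\rho_\G = |z|$ and the Euclidean volume element becomes $\sqrt{|g|}\,dx$, and then to expand the numerator and the denominator of $J(u_\e)$ separately in powers of $\e$ before forming the quotient. By \eqref{eq:TestFunction-th} and \eqref{eq:AE} we have $u_\e\circ F_{y_0}(x)=\e^{(2-N)/2}\eta(x)\,w(x/\e)$, which is cylindrically symmetric once $\eta$ is chosen cylindrical; this symmetry will be used repeatedly to discard odd terms.

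First I would treat the Dirichlet part of the numerator, $\int_{\B_{2r}}|\n(u_\e\circ F_{y_0})|_g^2\sqrt{|g|}\,dx$, by applying Lemma \ref{lem:to-prove} with $v=u_\e\circ F_{y_0}$ and then changing variables $x=\e X$. Each weighted correction carries a factor $|z|^2=\e^2|Z|^2$ or $|t|^2=\e^2|T|^2$, so after rescaling the corrections acquire the prefactor $\e^2$ and turn into integrals of $w$ over $\B_{r/\e}$. At this stage I would invoke the Gauss equation $\G^2=H^2-R_g$ (valid since the ambient space is flat) to eliminate the second–fundamental–form norm $\G^2$ in favour of $H^2$ and $R_g$; this is exactly what converts $\tfrac{3\G^2-2H^2}{k(N-k)}$ into $\tfrac{H^2-3R_g}{k(N-k)}$ and $\tfrac{H^2-\G^2/2}{N-k}$ into $\tfrac{H^2+R_g}{2(N-k)}$, producing the first two lines of \eqref{eq:expans-J-u-eps}. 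The flat leading term equals $\int_{\R^N}|\n w|^2=S_{N,\s}$, by testing the Euler--Lagrange equation \eqref{ExpoEA} against $w$ together with the normalization $\int_{\R^N}|z|^{-\s}w^{2^*_\s}=1$.

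Next I would expand the denominator $\int|z|^{-\s}|u_\e\circ F_{y_0}|^{2^*_\s}\sqrt{|g|}\,dx$ using the expansion of $\sqrt{|g|}$ from Lemma \ref{DetInv}. The term linear in $z$ integrates to zero because $|z|^{-\s}w^{2^*_\s}$ is even, and averaging the surviving quadratic terms ($z_iz_j\mapsto\delta_{ij}|z|^2/(N-k)$, $t_ct_d\mapsto\delta_{cd}|t|^2/k$, with $\sum_c \mathrm{Ric}_{cc}=R_g$), again using $\G^2=H^2-R_g$, gives $1+\e^2 Q+O(\e^{N-2})$, where $Q$ collects the two weighted integrals $\int|z|^{2-\s}w^{2^*_\s}$ and $\int|t|^2|z|^{-\s}w^{2^*_\s}$. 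Forming $J(u_\e)=\mathrm{Num}/(1+\e^2Q)^{2/2^*_\s}$ and using $(1+\e^2Q)^{-2/2^*_\s}=1-\tfrac{2}{2^*_\s}\e^2Q+O(\e^4)$, the product of the leading factor $S_{N,\s}$ with the denominator correction supplies the two terms on the third line of \eqref{eq:expans-J-u-eps}, the prefactor $\tfrac{2}{2^*_\s}S_{N,\s}$ arising from differentiating the $2/2^*_\s$–power. Finally the potential term $\int h\,u_\e^2$ is handled by Taylor expanding $h\circ F_{y_0}$ about $y_0$ and $\sqrt{|g|}$ about $1$; after rescaling it yields $\e^2h(y_0)\int_{\B_{r/\e}}w^2\,dx$ plus the modulus–of–continuity remainder $O\!\left(\e^2\int_{\B_{r/\e}}|h(F_{y_0}(\e x))-h(y_0)|\,w^2\,dx\right)$.

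The main obstacle will be the error control rather than the algebra. Every simplification above — dropping $\eta$ where $\eta\equiv1$, truncating domains to $\B_{r/\e}$, discarding the cross term $g^{ai}\de_{t_a}v\,\de_{z_i}v$, which is $O(|x|^3)$ thanks to the antisymmetry \eqref{eq:tau-antisymm}, and absorbing the cubic remainder of the metric — must be shown to cost only $O(\e^{N-2})$ or $o(\e^2)$. This requires the sharp pointwise decay of $w$, $\n w$ and $D^2w$ from Lemma \ref{cor:dec-est-w}: on the cutoff annulus $r\le|x|\le 2r$ one has $w(\cdot/\e)$ and $\e|\n w(\cdot/\e)|$ of size $O(\e^{N-2})$, which produces the stated $O(\e^{N-2})$ tail, while the same estimates govern the growth of the retained integrals over $\B_{r/\e}$ — these diverge only logarithmically when $N=4$ and hence still dominate the remainder. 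Keeping those integrals unevaluated, exactly as in the statement, is precisely what lets one sidestep this dimension–dependent divergence and present a clean expansion.
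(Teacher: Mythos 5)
Your proposal is correct and follows the paper's own proof essentially step for step: pull back by $F_{y_0}$, apply Lemma \ref{lem:to-prove} to the rescaled Dirichlet energy, expand the weighted $L^{2^*_\s}$ norm via Lemma \ref{DetInv} and Taylor-expand the $2/2^*_\s$ power, treat the $h$-term by continuity, control all tails with Lemma \ref{cor:dec-est-w}, and finish with the Gauss equation $\G^2=H^2-R_g$ (which the paper invokes at the very end rather than up front, an immaterial reordering). The only caveat is the sign of the denominator correction: your (correct) expansion $(1+\e^2Q)^{-2/2^*_\s}=1-\tfrac{2}{2^*_\s}\e^2Q+O(\e^4)$ produces $-\tfrac{2}{2^*_\s}S_{N,\s}\,\e^2Q$ in $J(u_\e)$, whereas the third line of \eqref{eq:expans-J-u-eps} (and the paper's own final combination) carries the opposite sign --- an inconsistency already present in the paper's proof rather than a defect of your route, but one you should not paper over by asserting that your computation ``supplies'' those terms as stated.
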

\begin{proof}
To simplify the notations, we will write $F$  in the place of $F_{y_0}$.
Recalling \eqref{eq:TestFunction-w}, we   write
$$
u_\e(y)=\e^{\frac{2-N}{2}} \eta(F^{-1} (y)) W_\e(y),
$$
where $W_\e (y) =w \left(\frac{F^{-1} (y)}{\e} \right)$.
Then $
|\n u_\e |^2 =\e^{2-N} 
\left(
\eta^2 |\n W_\e|^2+\eta^2 |\n W_\e |^2+\frac{1}{2} \n W_\e^2 \cdot \n \eta^2
\right).
$
Integrating by parts, we have
\begin{align}\label{eq:expan-nabla-u-eps}
\displaystyle \int_\O |\n u_\e|^2 dy &\displaystyle=\e^{2-N} \int_{F\left({Q}_{2r}\right)} \eta^2 |\n W_\e|^2  dy+\e^{2-N} \int_{F\left({Q}_{2r}\right)\setminus F\left(\B_{r}\right)} W_\e^2 \left(|\n \eta|^2-\frac{1}{2} \D \eta^2 \right)  dy \nonumber\\
&\displaystyle=\e^{2-N} \int_{F\left({Q}_{2r}\right)} \eta^2 |\n W_\e|^2  dy-\e^{2-N} \int_{F\left({Q}_{2r}\right)\setminus F\left(\B_{r}\right)}W_\e^2 \eta \D \eta  dy  \nonumber\\
&\displaystyle=\e^{2-N} \int_{F\left({Q}_{2r}\right)} \eta^2 |\n W_\e|^2  dy+O\left(\e^{2-N} \int_{F\left({Q}_{2r}\right)\setminus F\left(\B_{r}\right)} W_\e^2  dy\right) .
\end{align}
By the  change of variable $y=\frac{F(x)}{\e}$ and \eqref{eq:TestFunction-th}, we can apply Lemma \ref{lem:to-prove}, to get
\begin{align*}
\displaystyle \int_\O |\n u_\e|^2 dy\displaystyle&= \int_{ {Q}_{r/\e} } |\n w|^2_{g_\e}\sqrt{|g_\e|} dx+O\left(\e^{2} \int_{ {Q}_{2r/\e} \setminus  \B_{r/\e} }w^2 dx+
 \int_{ {Q}_{2r/\e} \setminus  \B_{r/\e} }|\n w|^2 dx \right)\\\\
&=\int_{\R^N}|\n w|^2  dx+\e^2\frac{3\G^2-2H^2}{k(N-k)}\int_{\B_{r/\e}} |z|^2 |\n_t w|^2 dx+\e^2\frac{R_g(x_0)}{3k^2} \int_{\B_{r/\e}} |t|^2 |\n_t w|^2 dx\nonumber\\\\
 &+\e^2\frac{H^2-\G^2/2}{N-k}\int_{\B_{r/\e}} |z|^2 |\n w|^2 dx-\e^2\frac{R_g(x_0)}{6k} \int_{\B_{r/\e}} |t|^2 |\n w|^2 dx+O\left(\rho(\e)\right)\\\\
&=S_{N,\s}+\e^2\frac{3\G^2-2H^2}{k(N-k)}\int_{\B_{r/\e}} |z|^2 |\n_t w|^2 dx+\e^2\frac{R_g(x_0)}{3k^2} \int_{\B_{r/\e}} |t|^2 |\n_t w|^2 dx\nonumber\\\\
 &+\e^2\frac{H^2-\G^2/2}{N-k}\int_{\B_{r/\e}} |z|^2 |\n w|^2 dx-\e^2\frac{R_g(x_0)}{6k} \int_{\B_{r/\e}} |t|^2 |\n w|^2 dx+O\left(\rho(\e)\right),
\end{align*}
 where 
 $$
 \rho(\e)=\e^3 \int_{ \B_{r/\e}} |x|^3 |\n w|^2 dx +\e^2 \int_{\B_{2r/\e}\setminus \B_{r/\e}}  |w|^2 dx+  \int_{\R^N\setminus\B_{r/\e}}|\n w|^2 dx+  \e^2 \int_{\B_{2r/\e}\setminus \B_{r/\e}}|z|^2|\n w|^2 dx.
 $$
Using Lemma \ref{cor:dec-est-w}, we find  that  $\rho(\e)=O\left(\e^{N-2}\right)$. Therefore
\begin{align}
\displaystyle\int_\O |\n u_\e|^2 dy=S_{N,\s}&\displaystyle+\e^2\frac{3\G^2-2H^2}{k(N-k)}\int_{\B_{r/\e}} |z|^2 |\n_t w|^2 dx+\e^2\frac{R_g(x_0)}{3k^2} \int_{\B_{r/\e}} |t|^2 |\n_t w|^2 dx\nonumber\\\\
 &\displaystyle+\e^2\frac{H^2-\G^2/2}{N-k}\int_{\B_{r/\e}} |z|^2 |\n w|^2 dx-\e^2\frac{R_g(x_0)}{6k} \int_{\B_{r/\e}} |t|^2 |\n w|^2 dx+O\left(\e^{N-2}\right).
\end{align}
%
%
By the change of variable $y=\frac{F(x)}{\e}$, \eqref{DeterminantMetric}, \eqref{eq:AE} and using the fact that $\rho\left(F(x)\right)=|z|$,  we get 
\begin{align*}
&\displaystyle\int_\O \rho^{-\s}_\G |u_\e|^{2^*_\s} dy=\int_{\B_{r/\e}} |z|^{-s} w^{2^*_\s} \sqrt{|g_\e|}dx+ O\left( \int_{\B_{2r/\e}\setminus \B_{r/\e}}  |z|^{-\s} (\eta(\e x) w)^{2^*_\s} dx \right)\\
%
%
&\displaystyle= \int_{\B_{r/\e}} |z|^{-\s} w^{2^*_\s} dx+\e^2 \frac{H^2-\G^2/2}{N-k} \int_{\B_{r/\e}} |z|^{2-\s} w^{2^*_\s} dx-\e^2 \frac{R_g(x_0)}{6k} \int_{\B_{r/\e}} |t|^2 |z|^{-\s} w^{2^*_\s} dx\\
&\quad +O\left(\e^3 \int_{\B_{r/\e}} |x|^3 |z|^{-\s} w^{2^*_\s} dx+   \int_{\B_{2r/\e}\setminus \B_{r/\e}}  |z|^{-\s}  w^{2^*_\s} dx \right)\\
&\displaystyle= 1+\e^2 \frac{H^2-\G^2/2}{N-k} \int_{\B_{r/\e}} |z|^{2-\s} w^{2^*_\s} dx-\e^2 \frac{R_g(x_0)}{6k} \int_{\B_{r/\e}} |t|^2 |z|^{-\s} w^{2^*_\s} dx\\
&\quad +O\left(\e^3 \int_{\B_{r/\e}} |x|^3 |z|^{-\s} w^{2^*_\s} dx + \int_{\R^N \setminus \B_{r/\e}} |z|^{-\s} w^{2^*_\s} dx+ \int_{\B_{2r/\e}\setminus \B_{r/\e}}  |z|^{-\s}  w^{2^*_\s} dx\right).
\end{align*}
Using  \eqref{DecayEstimates111}, we have
$$
\e^3 \int_{\B_{r/\e}} |x|^3 |z|^{-\s} w^{2^*_\s} dx+\int_{\R^N \setminus \B_{r/\e}} |z|^{-\s} w^{2^*_\s} dx+\int_{\B_{2r/\e}\setminus \B_{r/\e}}  |z|^{-\s}  w^{2^*_\s} dx=O\left(\e^{N-\s}\right).
$$
Hence by Taylor expanding, we get
$$
\left(\int_\O \rho^{-\s}_\G |u_\e|^{2^*_\s} dx\right)^{2/2^*_\s} =1+\e^2 \frac{2H^2-\G^2}{2^*_\s(N-k)} \int_{\B_{r/\e}} |z|^{2-\s} w^{2^*_\s} dx-\e^2 \frac{R_g(x_0)}{2^*_\s (3k)} \int_{\B_{r/\e}} |t|^2 |z|^{-\s} w^{2^*_\s} dx+O\left(\e^{N-\s}\right).
$$
Finally, by \eqref{eq:expan-nabla-u-eps}, we conclude that
\begin{align*} 
\displaystyle J(u_\e)=S_{N,\s}&\displaystyle+\e^2\frac{3\G^2-2H^2}{k(N-k)}\int_{\B_{r/\e}} |z|^2 |\n_t w|^2 dx+\e^2\frac{R_g(x_0)}{3k^2} \int_{\B_{r/\e}} |t|^2 |\n_t w|^2 dx\nonumber\\\\
 &\displaystyle+\e^2\frac{H^2-\G^2/2}{N-k}\int_{\B_{r/\e}} |z|^2 |\n w|^2 dx-\e^2\frac{R_g(x_0)}{6k} \int_{\B_{r/\e}} |t|^2 |\n w|^2 dx\\\\
&\displaystyle+\e^2 \frac{2H^2-\G^2}{2^*_\s(N-k)}S_{N,\s} \int_{\B_{r/\e}} |z|^{2-\s} w^{2^*_\s} dx-\e^2 \frac{R_g(x_0)}{2^*_\s (3k)} S_{N,\s}\int_{\B_{r/\e}} |t|^2 |z|^{-\s} w^{2^*_\s} dx 
\nonumber\\\\
&+ \e^2  h(y_0) \int_{\B_{r/\e}} w^2 dx+O\left( \e^2  \int_{\B_{r/\e}} |h(F_{y_0}(\e x)- h(y_0)| w^2 dx\right)+O\left(\e^{N-2}\right).
\end{align*}
We thus get the desired result by using the Gauss equation $\G^2=H^2-R_g(x_0)$, see [\cite{Gray}, Chapter 4].
\end{proof}
%
%
%
%
%
%
%
%
%
\begin{proposition}\label{Proposition2}
  For $N\geq 5$, we define
$$
A_{N,\s}=\displaystyle\frac{1}{k(N-k)}\int_{\B_{r/\e}} |z|^2 |\n_t w|^2 dx+\frac{1}{2(N-k)}\int_{\B_{r/\e}} |z|^2 |\n w|^2 dx\displaystyle+\frac{1}{2^*_\s(N-k)}S_{N,\s} \int_{\B_{r/\e}} |z|^{2-\s} w^{2^*_\s} dx,
$$ 
\begin{align*}
B_{N,\s}=&-\displaystyle\frac{3}{k(N-k)}\int_{\B_{r/\e}} |z|^2 |\n_t w|^2 dx+\frac{1}{3k^2} \int_{\B_{r/\e}} |t|^2 |\n_t w|^2 dx+\frac{1}{2(N-k)}\int_{\B_{r/\e}} |z|^2 |\n w|^2 dx\nonumber\\\\
&\displaystyle-\frac{1}{6k} \int_{\B_{r/\e}} |t|^2 |\n w|^2 dx+\frac{S_{N,\s}}{2^*_\s(N-k)} \int_{\B_{r/\e}} |z|^{2-\s} w^{2^*_\s} dx-\frac{S_{N,\s}}{2^*_\s (3k)}\int_{\B_{r/\e}} |t|^2 |z|^{-\s} w^{2^*_\s} dx
\end{align*}
and
$$
C_{N,\s}=\int_{\R^N} |\n w|^2 dx.
$$
Assume that, for some $y_0 \in \G$, we have
$$
\begin{cases}
\displaystyle \frac{A_{N,\s}H^2 +B_{N,\s} R_g(y_0)}{{C_{N,\s}}}+h(y_0)<0 & \qquad \textrm{for } N \geq 5\\\\
\displaystyle  A_k H^2(y_0)+B_k R_g(y_0)+h(y_0)<0 & \qquad \textrm{for } N=4,
\end{cases}
$$
Then
$$
\mu_{h,\s}\left(\O,\G \right) < S_{N,\s}.
$$
\end{proposition}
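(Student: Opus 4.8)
The plan is to test the quotient $J$ from \eqref{eq:def-J-u} against the concentrating family $u_\e$ of \eqref{eq:TestFunction-w} and to read off the sign of the leading correction to $S_{N,\s}$ directly from the expansion in Lemma \ref{Expansion}. Since $u_\e\in H^1_0(\O)$ for every $\e>0$, one always has $\mu_{h,\s}(\O,\G)\leq J(u_\e)$; hence it suffices to produce a single $\e>0$ with $J(u_\e)<S_{N,\s}$, which will follow as soon as the coefficient of the leading power of $\e$ in Lemma \ref{Expansion} is shown to be strictly negative under the hypothesis.

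First I would regroup the six curvature integrals and the potential term appearing in Lemma \ref{Expansion} according to their dependence on $H^2(y_0)$, $R_g(y_0)$ and $h(y_0)$. A direct inspection of the expansion shows that the total coefficient of $\e^2H^2(y_0)$ is exactly $A_{N,\s}$, that the total coefficient of $\e^2R_g(y_0)$ is exactly $B_{N,\s}$, and that the coefficient of $\e^2h(y_0)$ is $\int_{\B_{r/\e}}w^2\,dx$. The leftover potential error $O(\e^2\int_{\B_{r/\e}}|h(F_{y_0}(\e x))-h(y_0)|\,w^2\,dx)$ is controlled by the continuity of $h$: since $h(F_{y_0}(\e x))\to h(y_0)$ pointwise and $h$ is bounded near $y_0$, this term is of lower order than the $h(y_0)$-term itself --- by dominated convergence when $\int_{\R^N}w^2\,dx<\infty$, and by splitting the domain of integration at an intermediate radius $\delta/\e$ when that integral diverges.

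For $N\geq 5$ the decay rate $w(x)\sim|x|^{2-N}$ from Lemma \ref{cor:dec-est-w} makes every integrand above behave, after integrating out the angular directions, like $|x|^{3-N}$, which is integrable precisely because $3-N<-1$. Consequently $A_{N,\s}$ and $B_{N,\s}$ are finite and $\int_{\B_{r/\e}}w^2\,dx$ converges to a positive limit $C_{N,\s}$, so that
\begin{equation*}
J(u_\e)=S_{N,\s}+\e^2\bigl[A_{N,\s}H^2(y_0)+B_{N,\s}R_g(y_0)+C_{N,\s}\,h(y_0)\bigr]+o(\e^2).
\end{equation*}
Dividing the bracket by $C_{N,\s}>0$ reproduces precisely the stated condition for $N\geq 5$, so the bracket is negative and $J(u_\e)<S_{N,\s}$ for all sufficiently small $\e$.

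The delicate part is the borderline dimension $N=4$, where $3-N=-1$ and each curvature integral, as well as $\int_{\B_{r/\e}}w^2\,dx$, diverges logarithmically, behaving like a constant times $\log(1/\e)$ as $\e\to 0$. Here the genuine leading order is $\e^2\log(1/\e)$ rather than $\e^2$, so I would extract the precise $\log(1/\e)$-coefficient of each integral from the sharp asymptotics $w\sim c|x|^{-2}$ and $|\n w|\sim c'|x|^{-3}$ of Lemma \ref{cor:dec-est-w}; these logarithmic coefficients are exactly what the constants $A_k$ and $B_k$ encode, while the $h$-term contributes a further positive logarithmic coefficient. After normalising by the latter, the hypothesis $A_kH^2(y_0)+B_kR_g(y_0)+h(y_0)<0$ makes the $\e^2\log(1/\e)$ coefficient strictly negative. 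The main obstacle is then the careful bookkeeping required to confirm that all the competing remainders --- the genuine $O(\e^2)$ terms, the term $O(\e^{N-2})=O(\e^2)$, and the potential error above --- are truly $o(\e^2\log(1/\e))$, so that none of them can reverse the sign dictated by $A_k$, $B_k$ and $h(y_0)$.
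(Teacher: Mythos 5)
Your argument for $N\geq 5$ is correct and is essentially the paper's: you regroup the terms of Lemma \ref{Expansion} according to their dependence on $H^2(y_0)$, $R_g(y_0)$ and $h(y_0)$, observe that the decay estimates of Lemma \ref{cor:dec-est-w} make all the integrals converge, handle the potential error by continuity of $h$, and let $\e\to 0$. (One small caveat: the coefficient of $\e^2h(y_0)$ in Lemma \ref{Expansion} is $\int_{\B_{r/\e}}w^2\,dx$, whose limit is $\int_{\R^N}w^2\,dx$; you have tacitly taken $C_{N,\s}$ to be this quantity rather than the displayed $\int_{\R^N}|\n w|^2\,dx$, which is indeed the reading consistent with the paper's own proof.)

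The case $N=4$ is where your plan has a genuine gap. You propose to extract the precise $\log(1/\e)$-coefficient of each of the six divergent integrals from ``sharp asymptotics'' $w\sim c|x|^{-2}$, $|\n w|\sim c'|x|^{-3}$. But Lemma \ref{cor:dec-est-w} provides only a two-sided bound on $w$ and a one-sided upper bound on $|\n w|$; no sharp gradient asymptotics are available, and the integrals distinguish $|\n_t w|$ from $|\n w|$ and the weight $|z|^2$ from $|t|^2$, so their logarithmic rates depend on the directional distribution of $\n w$ at infinity, which no radial asymptotic of $w$ controls. Moreover, ``normalising by'' $\int_{\B_{r/\e}}w^2\,dx$ is not legitimate when that quantity is only pinned down up to the multiplicative constants $c,\ov c$ of \eqref{EtAlors1}. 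The paper's way around all of this is the Pohozaev-type identities \eqref{eq:z-sqrt-w-2-star}, obtained by multiplying \eqref{ExpoEA} by $|z|^2\eta_\e w$ and by $|t|^2\eta_\e w$ and integrating by parts; since $\int_{\B_{r/\e}}|z|^{2-\s}w^{2^*_\s}dx=O(1)$ in dimension $4$, these give
$$
\int_{\B_{r/\e}}|z|^2|\n w|^2\,dx=(N-k)\int_{\B_{r/\e}}w^2\,dx+O(1),\qquad \int_{\B_{r/\e}}|t|^2|\n w|^2\,dx=k\int_{\B_{r/\e}}w^2\,dx+O(1),
$$
while the $\n_t$-integrals are bounded crudely by $|\n_t w|\leq|\n w|$. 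Every divergent term is thereby expressed as an explicit multiple of the single quantity $\int_{\B_{r/\e}}w^2\,dx\asymp|\log\e|$, at the price of absolute values in the final constants (whence the sign cases in \eqref{Options}). Without this identity, or some substitute yielding the exact logarithmic rates of each integral separately, your $N=4$ argument cannot be completed.
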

\begin{proof}
We claim that
 \begin{align}\label{eq:z-sqrt-w-2-star}
S_{N,\s} \int_{Q_{r/\e}}      |z|^{2-\s}  w^{2^*_\s} dx&= \int_{Q_{r/\e}}  |z|^2 |\n w|^2 dx-(N-k)\int_{Q_{r/\e}} w^2        dx +O(\e^{N-2})\\
S_{N,\s} \int_{Q_{r/\e}}      |t|^2 |z|^{-\s}  w^{2^*_\s} dx&= \int_{Q_{r/\e}}  |t|^2 |\n w|^2 dx-k\int_{Q_{r/\e}} w^2        dx +O(\e^{N-2})
\end{align} 
To prove this claim, we let $\eta_\e(x)=\eta(\e x)$. 
We multiply \eqref{ExpoEA} by $|z|^2\eta_\e w$ and integrate by parts to get
\begin{align*}
\displaystyle S_{N,\s}& \int_{Q_{2r/\e}}    \eta_\e  |z|^{2-\s}  w^{2^*_\s} dx\displaystyle=\int_{Q_{2r/\e}} \n w \cdot \n \left(\eta_\e  |z|^2 w\right) dx\\
&\displaystyle= \int_{Q_{2r/\e}} \eta_\e  |z|^2 |\n w|^2 dx+\frac{1}{2} \int_{Q_{2r/\e}} \n w^2 \cdot \n \left(|z|^2\eta_\e \right) dx \int_{Q_{2r/\e}} \eta_\e  |z|^2 |\n w|^2 dx-\frac{1}{2} \int_{Q_{2r/\e}} w^2 \D\left(|z|^2\eta_\e \right) dx\\
&\displaystyle=\int_{Q_{2r/\e}} \eta_\e  |z|^2 |\n w|^2 dx-(N-1)\int_{Q_{2r/\e}} w^2 \eta_\e      dx =\displaystyle\quad-\frac{1}{2} \int_{Q_{2r/\e}\setminus Q_{r/\e}} w^2 (|z|^2\D\eta_\e+ 4\n \eta_\e\cdot z) dx.
\end{align*} 
We then deduce that 
\begin{align*}
S_{N,\s} \int_{Q_{r/\e}}      |z|^{2-\s}  w^{2^*_\s} dx&= \int_{Q_{r/\e}}  |z|^2 |\n w|^2 dx-(N-1)\int_{Q_{r/\e}} w^2        dx\\
&+O\left( \int_{Q_{2r/\e}\setminus Q_{r/\e}}      |z|^{2-\s}  w^{2^*_\s} dx+  \int_{Q_{2r/\e}\setminus  Q_{r/\e}}  |z|^2 |\n w|^2 dx+ \int_{Q_{2r/\e}\setminus  Q_{r/\e}}     w^2 dx     \right)\\
&+ O\left(  \e \int_{Q_{2r/\e}\setminus  Q_{r/\e}}  |z|  |\n w| dx+ \e^2 \int_{Q_{2r/\e}\setminus  Q_{r/\e}}   |z|^2   w^2 dx     \right).
\end{align*} 
Thanks to Lemma \ref{cor:dec-est-w}, we  get the first equation of \eqref{eq:z-sqrt-w-2-star} as claimed. For the second one we multiply \eqref{ExpoEA} by $|t|^2\eta_\e w$ and integrate by parts as in the first one.

Next, by the continuity of  $h$, for $\delta>0$, we can find  $r_\d>0$ such that
\be \label{eq:cont-h-eff}
|h(y)-h(y_0)|<\d \qquad \textrm{for ever $y\in F\left(\B_{r_\d}\right)$ }.
\ee

\noindent
\textbf{Case $N\geq 5.$}\\
\noindent
Using  \eqref{eq:z-sqrt-w-2-star} and \eqref{eq:cont-h-eff}  in   \eqref{eq:expans-J-u-eps},  we obtain, for every $r\in (0,r_\d)$
\begin{align}\label{eq:expans-J-u-eps} 
\displaystyle J(u_\e)=S_{N,\s}&\displaystyle+\e^2\frac{H^2-3R_g(x_0)}{k(N-k)}\int_{\R^N} |z|^2 |\n_t w|^2 dx+\e^2\frac{R_g(x_0)}{3k^2} \int_{\R^N} |t|^2 |\n_t w|^2 dx\nonumber\\\\
 &\displaystyle+\e^2\frac{H^2+R_g(x_0)}{2(N-k)}\int_{\R^N} |z|^2 |\n w|^2 dx-\e^2\frac{R_g(x_0)}{6k} \int_{\B_{r/\e}} |t|^2 |\n w|^2 dx\nonumber\\\\
&\displaystyle+\e^2 \frac{H^2+R_g(x_0)}{2^*_\s(N-k)}S_{N,\s} \int_{\R^N} |z|^{2-\s} w^{2^*_\s} dx-\e^2 \frac{R_g(x_0)}{2^*_\s (3k)} S_{N,\s}\int_{\R^N} |t|^2 |z|^{-\s} w^{2^*_\s} dx\nonumber\\\\
&+  \e^2  h(y_0) \int_{\R^N} w^2 dx+O\left( \e^2 \d^2 \int_{\R^N}   w^2 dx\right)+O\left(\e^{N-2}\right),
\end{align}
where we have used Lemma \ref{cor:dec-est-w} to get the estimates
$$
\int_{  \R^N\setminus \B_{r/\e}} |z|^2 |\n w|^2 dx+   \int_{\R^N\setminus \B_{r/\e}} w^2 dx=O(\e).
$$
It follows that, for every $r\in (0,r_\d)$,
\begin{align*}
J\left(u_\e\right)=  S_{N,\s}+\e^2\left\lbrace A_{N,\s} H^2(y_0)+B_{N,\s} R_g(y_0)+C_{N,\s} h(y_0)\right\rbrace+O(\d\e^2   B_{N,\s})+O\left(\e^{3}\right).
\end{align*}
Suppose now that 
$$
A_{N,\s} H^2(y_0)+B_{N,\s} R_g(y_0)+C_{N,\s} h(y_0)<0
$$
We can thus    choose respectively  $\delta>0$ small and    $\e>0$ small so that  $ J(u_\e)< S_{N,\s}$. Hence we get
$$
\mu_{h,\s}\left(\O ,\G\right)< S_{N,\s}.
$$
\noindent
\textbf{Case $N=4.$}\\
\noindent
From \eqref{eq:expans-J-u-eps} and \eqref{eq:cont-h-eff}, we estimate, for every $r\in (0,r_\d)$
\begin{align*}\label{eq:expans-J-u-eps} 
\displaystyle J(u_\e)\leq S_{N,\s}&\displaystyle+\e^2\frac{|H^2-3R_g(x_0)|}{k(N-k)}\int_{\B_{r/\e}} |z|^2 |\n w|^2 dx+\e^2\frac{|R_g(x_0)|}{3k^2} \int_{\B_{r/\e}} |t|^2 |\n w|^2 dx\nonumber\\\\
 &\displaystyle+\e^2\frac{H^2+R_g(x_0)}{2(N-k)}\int_{\B_{r/\e}} |z|^2 |\n w|^2 dx-\e^2\frac{R_g(x_0)}{6k} \int_{\B_{r/\e}} |t|^2 |\n w|^2 dx\nonumber\\\\
&\displaystyle+\e^2 \frac{H^2+R_g(x_0)}{2^*_\s(N-k)}S_{N,\s} \int_{\B_{r/\e}} |z|^{2-\s} w^{2^*_\s} dx-\e^2 \frac{R_g(x_0)}{2^*_\s (3k)} S_{N,\s}\int_{\B_{r/\e}} |t|^2 |z|^{-\s} w^{2^*_\s} dx \nonumber\\\\
&+  \e^2  h(y_0) \int_{\B_{r/\e}} w^2 dx+O\left( \e^2  \d\int_{\B_{r/\e}} w^2 dx\right)+O\left(\e^{N-2}\right)\nonumber.
\end{align*}
Further since, by \eqref{DecayEstimates111},  
$$
\int_{\B_{r/\e}} |z|^{2-\s} w^{2^*_\s} dx=O(1),
$$
then by \eqref{eq:z-sqrt-w-2-star}, we get
\begin{align*}
\displaystyle J(u_\e)\leq S_{N,\s}&\displaystyle+\e^2\frac{|H^2-3R_g(x_0)|}{k}\int_{\B_{r/\e}} w^2 dx+\e^2\frac{|R_g(x_0)|}{3k} \int_{\B_{r/\e}} w^2dx\nonumber\\\\
 &\displaystyle+\e^2\frac{H^2+R_g(x_0)}{2}\int_{\B_{r/\e}} w^2 dx-\e^2\frac{R_g(x_0)}{6} \int_{\B_{r/\e}} w^2 dx\nonumber\\\\
&+  \e^2  h(y_0) \int_{\B_{r/\e}} w^2 dx+O\left( \e^2  \d\int_{\B_{r/\e}} w^2 dx\right)+O\left(\e^{N-2}\right)\nonumber.
\end{align*}
Therefore
\begin{align*}
\displaystyle J(u_\e)\leq S_{N,\s}\displaystyle&+\e^2\left[ \frac{|H^2(y_0)-3R_g(y_0)|}{k}+\frac{|R_g(y_0)|}{3k}+ \frac{H^2(y_0)}{2}+\frac{R_g(y_0)}{3}+h(y_0)\right]\int_{\B_{r/\e}} w^2 dx\\\
&+O\left( \e^2  \d\int_{\B_{r/\e}} w^2 dx\right)+O\left(\e^{N-2}\right).
\end{align*}
Thus 
\begin{align*}
\displaystyle J(u_\e)\leq S_{N,\s}\displaystyle&+\e^2\left[ \frac{|H^2(y_0)-3R_g(y_0)|}{k}+\frac{|R_g(y_0)|}{3k}+ \frac{H^2(y_0)}{2}+\frac{R_g(y_0)}{3}+h(y_0)\right]\int_{\B_{r/\e}} w^2 dx\\\
&+O\left( \e^2  \d\int_{\B_{r/\e}} w^2 dx\right)+C\e^2,
\end{align*}
for some positive constant $C$ independent on $\e$.
By \eqref{DecayEstimates111}, we have that
$$
\begin{array}{ll}
\displaystyle \int_{\B_{r/\e}} \frac{C_1^2}{1+|x|^2} dx \leq \int_{\B_{r/\e}} w^2 dx \leq \int_{\B_{r/\e}} \frac{C_2^2}{1+|x|^2} dx,
\end{array}
$$
so that 
\begin{equation}\label{EtAlors0}
\begin{array}{ll}
\displaystyle \int_{B_{\R^4}(0,r/\e)} \frac{C_1^2}{\left(1+|x|^2\right)^2} dx \leq \int_{\B_{r/\e}} w^2 dx \leq \int_{B_{\R^4}(0,2r/\e)} \frac{C_2^2}{\left(1+|x|^2\right)^2} dx.
\end{array}
\end{equation}
Using polar coordinates and  a change of variable, for $R>0$, we have
$$
\begin{array}{ll}
\displaystyle \int_{B_{\R^4}(0,R)} \frac{dx}{\left(1+|x|^2\right)^2} dx
&\displaystyle= |S^3| \int_0^{R} \frac{t^3}{\left(1+t^2\right)^2} dt\\
&\displaystyle= |S^3| \int_0^{\sqrt{R}} \frac{s}{2\left(1+s\right)^2} ds\\
&\displaystyle=\frac{|S^3|}{2}\left(\log\left(1+\sqrt{R}\right)-\frac{\sqrt{R}}{1+\sqrt{R}}\right).
\end{array}
$$
Therefore, there exist numerical constants $c,\ov c>0$ such that for every  $\e>0$ small, we have  
\begin{equation}\label{EtAlors1}
 c   |\log  \e |\leq   \int_{\B_{r/\e}} w^2 dx \leq \ov c   |\log  \e |.
\end{equation}
Now we assume that  
$$
\frac{|H^2(y_0)-3R_g(y_0)|}{k}+\frac{|R_g(y_0)|}{3k}+ \frac{H^2(y_0)}{2}+\frac{R_g(y_0)}{3}+h(y_0)<0.
$$ 
Therefore by Lemma \ref{Expansion}  and \eqref{EtAlors1}, we get  
$$
\displaystyle J(u_\e)\leq S_{N,\s}\displaystyle+\ov c \left[\frac{|H^2(y_0)-3R_g(y_0)|}{k}+\frac{|R_g(y_0)|}{3k}+ \frac{H^2(y_0)}{2}+\frac{R_g(y_0)}{3}+h(y_0)\right]\e^2 |\log \e|+ \ov c \d\e^2  |\log \e|+ C \e^{2} .
$$
 Then choosing $\d>0$ small and     $\e$ small, respectively, we deduce that  $\mu_{h,\s}\left(\O,\G\right)\leq J(u_\e)< S_{4,\s}$.
This ends the proof of the proposition.
\end{proof}
\begin{proof}[Proof of Theorem \ref{MainResult} (completed)]
We know that when $\mu_{h,\s}\left(\O, \G\right) < S_{N,\s}$ then $\mu_{h,\s}\left(\O, \G\right)$ is achieved by a positive function $u$, see Proposition \ref{Pro3333} above. Therefore by Proposition \ref{Proposition2},  we get the result with $C^1_{N,\s}=\frac{A_{N,\s}}{C_{N,\s}}$ and $C^2_{N,\s}=\frac{B_{N,\s}}{C_{N,\s}}$ for $N\geq 5$. When $N=4$, we get $C^1_{4,\s}$ and $C^2_{4,\s}$ depending on the signs of $H^2(y_0)-3R_g(y_0)$ and $R_g(y_0)$. They are given by
\begin{align}
\displaystyle C^1_{4,\s}=0& \quad\textrm{and}\quad C^2_{4,\s}= \frac{1}{3}-\frac{10}{3k} \quad &\textrm{when}\quad H^2\leq R_g(y_0) \leq 0.\nonumber\\\
\displaystyle C^1_{4,\s}=\frac{1}{2}+\frac{1}{k}& \quad\textrm{and}\quad C^2_{4,\s}= \frac{1}{2}-\frac{8}{3k} \quad &\textrm{when}\quad H^2\geq 3 R_g(y_0) \geq 0.\nonumber\\\
\displaystyle C^1_{4,\s}=\frac{1}{2}+\frac{1}{k}& \quad\textrm{and}\quad C^2_{4,\s}= \frac{1}{2}-\frac{10}{3k} \quad &\textrm{when} \quad R_g(y_0) \leq 0.\label{Options}\\\
\displaystyle C^1_{4,\s}=\frac{1}{2}-\frac{1}{k}& \quad\textrm{and}\quad C^2_{4,\s}= \frac{1}{2}-\frac{10}{3k} \quad &\textrm{when} \quad H^2-3R_g(y_0) \leq 0 \quad\textrm{and} \quad R_g(y_0) \geq 0.\nonumber
\end{align}
\end{proof}
\bigskip
\noindent
\textbf{Acknowledgement:}
I wish to thanks my supervisor Mouhamed Moustapha Fall for useful discussions and remarks.  This work is  supported by the German Academic Exchange Service (DAAD).

\end{document}